\newtheorem{theorem}{Theorem}[section]
\newtheorem{proposition}[theorem]{Proposition}
\newtheorem{corollary}[theorem]{Corollary}
\newtheorem{lemma}[theorem]{Lemma}
\theoremstyle{definition}
\newcommand{\cn}{\mathcal N}
\newcommand{\N}{\mathbb{N}}
\newcommand{\Z}{\mathbb{Z}}
\newcommand{\R}{\mathbb{R}}
\newcommand{\varep}{\varepsilon}
\newcommand{\bee}{\mathbf{e}}
\newcommand{\bz}{\mathbf{0}}
\newcommand{\bone}{\mathbf{1}}
\newcommand{\bsb}{\boldsymbol}
\def\one{{\mathbf 1}}
\title{Oscillatory Loomis-Whitney and Projections of Sublevel Sets}
	\newcommand{\Addresses}{{
		\bigskip
		\footnotesize
		
		\textsc{Department of Mathematics, Michigan State University, East Lansing, Michigan 48824}\par\nopagebreak
		\textit{E-mail address}: \texttt{gilulama@math.msu.edu}
		
		\medskip
		
		\textsc{Department of Mathematics, UC Berkeley, Berkeley, California 94720}\par\nopagebreak
		\textit{E-mail address}: \texttt{oneill@math.berkeley.edu}
		
		\medskip
		
		\textsc{Google Brain, Mountain View, California 94043}\par\nopagebreak
		\textit{E-mail address}: \texttt{lechao.xiao@gmail.com}}}
\begin{document}
	\author{Maxim Gilula, Kevin O'Neill, and Lechao Xiao}
	\maketitle
	\begin{abstract}
		We consider an oscillatory integral operator with Loomis-Whitney multilinear form. The phase is real analytic in a neighborhood of the origin in $\R^d$ and satisfies a nondegeneracy condition related to its Newton polyhedron. Maximal decay is obtained for this operator in certain cases, depending on the Newton polyhedron of the phase and the given Lebesgue exponents. Our estimates imply volumes of sublevel sets of such real analytic functions are small relative to the product of areas of projections onto coordinate hyperplanes. 
	\end{abstract}

	\section{Introduction}
	In this paper we study multilinear oscillatory integral operators of the form
	
	\begin{equation}\label{eq: our op}
		\Lambda_d(f_1,\dots,f_d)= \int_{\R^d} e^{i\lambda S(x)}\psi(x) \prod_{j=1}^d f_j(\pi_j(x))  dx,
	\end{equation}
	where $\lambda\in \R$ is a real parameter, $S(x)$ is real-analytic in a neighborhood of the origin, $\psi\in C_0^\infty(\R^d)$, and  $\pi_j:\R^d\to \R^{d-1}$ are the surjective linear maps defined by 
	\begin{equation*}
		\pi_j(x)=(x_1,\dots,x_{j-1},x_{j+1},\dots, x_d):=\hat{x}_j.
	\end{equation*}
	
	A fundamental question answered positively in \cite{CLTT} is the following: Given exponents $p_1,\dots,p_d$, do there exist $C>0$ and $\epsilon>0$ such that 
	\begin{equation*}
		|\Lambda_d(f_1,\dots,f_d)|\leq C|\lambda|^{-\epsilon}\prod_{j=1}^d\|f_j\|_{p_j}
	\end{equation*}
	uniformly in the $f_j$?
	
	The case $d=2$ has been very well studied under various nondegeneracy conditions (e.g., \cites{ccw99, cw02, Hor, ps94}), so we only focus on $d\ge 3$. In this case, Christ-Li-Tao-Thiele \cite{CLTT} showed existence of decay for nondegenerate phases for the full admissible range of exponents $p_j$. However, their research provided an existence statement, leaving the sharp decay rate a mystery. The focus of our research herein is to provide an explicit decay rate, which in many cases turns out to be the sharp decay rate, depending geometrically on the Newton polyhedron of $S$ and the exponents $(p_1,\dots, p_d).$ The phases $S$ considered here are closely related to those studied by Varchenko\cite{varchenko76} in the scalar case, and by Gilula-Gressman-Xiao in the multilinear case\cite{GiGrXi18}.
	
	In the $d=2$ case, H\"ormander\cite{Hor} showed $\lambda^{-1/2}$ decay for nondegenerate phases when $p_1=p_2=2$. This is sharp in the $L^2$ case and even the $L^\infty$ case, as may be shown by example. In certain cases, our estimates contain a decay rate of $\lambda^{-2^{1-d}}$ and are sharp for particular choices of $p_j$, though the picture is unclear when $p_j\equiv\infty$. In the $L^\infty$ case, we are aware only of examples bounding the decay rate as at most $\lambda^{-1/d}$.
	
	We will provide a heuristic that the best expected decay is $\lambda^{-1/\delta}$, where $\delta$ is tied to both the $p_j$ and to the boundary of the Newton polyhedron $\cn(S)$ of $S$. This $\delta$ is a generalization of the so-called Newton distance, which is the sharp exponent of decay for scalar oscillatory integrals, assuming $S$ satisfies a certain nondegeneracy condition. (See Varchenko's work in \cite{varchenko76}.)
	
	Similar to Varchenko, our setup begins with a real analytic phase $S:\R^d\to\R$ satisfying a nondegeneracy condition depending on its Newton polyhedron, and $\psi:\R^d\to\R$ is a smooth cutoff function supported in a small enough neighborhood of the origin, depending on $S.$ (Formal definitions of nondegeneracy and of the Newton polyhedron will be given in Section \ref{sec: background}.) Our estimates are not uniform in any easily describable class of phases, analogous to the estimates in \cite{GiGrXi18}; if one is willing to trade a larger range of exponents for which a sharp estimate holds, a uniform estimate could potentially be obtained by an approach similar to that of Phong-Stein-Sturm\cite{pss01}.
	
	We provide two results in our theorem: an on-diagonal version and an off-diagonal version. There may be many different ways to interpolate between the conclusions of these theorems, so we simply leave this theorem with two possible hypotheses. At the end of Section \ref{sec: main proof}, we discuss other possible interpolations one could do with our global and local estimates, and provide an example illustrating one of the many nontrivial ways to interpolate our results: the failure of obtaining a sharp decay rate by interpolate global estimates can be ameliorated by first applying local estimates proved below, and then Corollary \ref{cor: linop}. The main issues with local Loomis-Whitney inequalities that prevent us from easily interpolating all possible cases is that the constants in these local inequalities depend on the domain.
	
	For exponents $p_1,\dots, p_d,$ we write $P=\sum_{j=1}^dp_j^{-1}$, following the convention that $\infty^{-1}=0$. Let $p=p(d)$ be defined by $p(d)=(d-1)\frac{2^{d-1}}{2^{d-1}-1}$.
	
	\begin{theorem}[Oscillatory Loomis-Whitney]\label{thm: main theorem}
		Let $S:\R^d\to\R$ be a nondegenerate real analytic function for $d\ge 3$. Let $\psi:\R^d\to\R$ be smooth and supported in a small enough neighborhood of the origin. Let $f_j\in L^{p_j}(\R^{d-1})$, where
		\begin{itemize}
			\item[(i)](Off-diagonal) $p_1\ge 2^{d-1},p_2\ge 2^{d-1}, p_3\ge 2^{d-2},\cdots,p_d\geq2$; or
			\item[(ii)](On-diagonal) all $p_j\ge p(d).$
		\end{itemize} 
		Let $\delta>0$ be such that 
		\begin{equation*}
			\delta(1-P+p_1^{-1},\dots, 1-P+p_d^{-1})\in\partial\cn(S).
		\end{equation*} 
		For all $\lambda\in\R\setminus\{0\},$
		\begin{equation*}
			|\Lambda_d(f_1,\dots,f_d)|\lesssim \prod_{j=1}^d \|f_j\|_{p_j}
			\begin{cases}
				|\lambda|^{-1/\delta}\log^{d}(2+|\lambda|)  & \text{ if } \delta \ge 2^{d-1},\\
				|\lambda|^{-1/2^{d-1}} & \text{ if }\delta < 2^{d-1},
			\end{cases}
		\end{equation*}
		where the implicit constant is independent of $f_1,\dots, f_d$ and $\lambda$.
	\end{theorem}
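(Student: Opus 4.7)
My plan is to reduce $\Lambda_d$ to a scalar oscillatory integral via iterated Cauchy--Schwarz, analyze its decay using the Newton polyhedron of the resulting tensor phase, and then obtain the on-diagonal case (ii) by interpolation against the trivial Loomis--Whitney estimate. This refines the $L^2$-based existence argument of Christ--Li--Tao--Thiele \cite{CLTT} by systematically tracking Newton polyhedron data through the reduction.

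For the off-diagonal case (i), I would apply Cauchy--Schwarz $d-1$ times to $\Lambda_d$, each step peeling off one factor $f_j$ in a distinct coordinate direction. Interleaved with H\"older reductions on the emerging mixed-norm tensor factors, the schedule $p_d\geq 2,\ p_{d-1}\geq 4,\ \ldots,\ p_2,p_1\geq 2^{d-1}$ in (i) is precisely tuned to this sequence. The outcome is an inequality of the form
\[
|\Lambda_d|^{2^{d-1}} \lesssim \prod_{j=1}^d \|f_j\|_{p_j}^{2^{d-1}} \cdot \Bigl| \int_{\R^{d\cdot 2^{d-1}}} e^{i\lambda\Phi(\mathbf{y})}\Psi(\mathbf{y})\,d\mathbf{y} \Bigr|,
\]
where $\Phi(\mathbf{y}) = \sum_{\varepsilon\in\{0,1\}^{d-1}}(-1)^{|\varepsilon|}S(y^{(\varepsilon)})$ is the alternating tensor sum of $S$ over $2^{d-1}$ copies of $\R^d$ sharing coordinates according to $\varepsilon$, and $\Psi$ is smooth and compactly supported.

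The main obstacle is extracting the correct decay rate from this scalar integral. I would compute $\cn(\Phi)$ from $\cn(S)$, show that the principal part of $\Phi$ on each face is a nonvanishing alternating tensor sum inheriting nondegeneracy from $S$ via the multilinearity of alternating sums, and identify the Newton distance of $\Phi$ in the direction $(1-P+p_j^{-1})_{j=1}^d$ as exactly $\delta/2^{d-1}$. Varchenko's theorem would then deliver scalar decay $|\lambda|^{-2^{d-1}/\delta}\log^{d}(2+|\lambda|)$ in the regime $\delta\geq 2^{d-1}$, yielding the advertised bound after extracting the $2^{d-1}$-th root; when $\delta<2^{d-1}$ the scalar rate would need to exceed $|\lambda|^{-1}$, capping the extractable decay at the floor $|\lambda|^{-1/2^{d-1}}$. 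Securing the logarithmic factor with exactly power $d$ (rather than $d\cdot 2^{d-1}$, which a naive Varchenko application on the $d\cdot 2^{d-1}$-dimensional tensor integral would produce) will require careful accounting of which faces of $\cn(\Phi)$ actually contribute logarithmic losses after resolution of singularities.

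For the on-diagonal case (ii), I would combine the $d!$ permutations of (i) with the trivial Loomis--Whitney bound $|\Lambda_d|\lesssim\prod\|f_j\|_{d-1}$ via multilinear interpolation. The symmetric average over permutations places $p_j^{-1}=1/d$ on the diagonal with $P=1$, and interpolating against the Loomis--Whitney endpoint at $p_j=d-1$ lands on the diagonal exponent $p(d)=(d-1)2^{d-1}/(2^{d-1}-1)$. The delicacy is that $\delta$ varies continuously with $(p_1,\ldots,p_d)$ through the defining ray in $\cn(S)$, so a clean interpolation between global bounds may not be sharp; the remedy flagged in the introduction is to interpolate local (dyadically decomposed) estimates instead, whose domain-dependent constants must then be tracked.
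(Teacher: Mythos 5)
Your plan takes a genuinely different route than the paper, and while the iterated Cauchy--Schwarz reduction to a tensor scalar integral is a recognized technique (it is essentially the Christ--Li--Tao--Thiele existence argument), it runs into gaps that would be hard to close as stated.

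The most serious issue is the nondegeneracy mismatch. You propose to feed the tensor phase $\Phi(\mathbf{y})=\sum_{\varepsilon}(-1)^{|\varepsilon|}S(y^{(\varepsilon)})$ into Varchenko's theorem. Varchenko requires nonvanishing of $\nabla\Phi_F$ off the coordinate hyperplanes, but the paper's hypothesis on $S$ is $D_dS_F\neq 0$ (nonvanishing of the full mixed partial $\partial_1\cdots\partial_d$), which the authors explicitly note does \emph{not} imply Varchenko's condition. Passing from $D_dS_F\neq 0$ to Varchenko-nondegeneracy of the alternating tensor sum $\Phi$ in $d\cdot 2^{d-1}$ variables is far from automatic — linear combinations of shifted copies of $S$ can develop genuinely new cancellations — and ``multilinearity of alternating sums'' doesn't address this; it isn't a linear statement about principal parts. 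This is exactly why the paper works instead with the operator van der Corput lemma (Lemma \ref{lemma: Phong-Stein}), whose hypothesis is precisely a pointwise lower bound on $D_2S$ together with derivative controls, making it compatible with the paper's notion of nondegeneracy without any detour through $\nabla S_F$.

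A second structural gap is that the paper's argument is fundamentally local. It never forms the global tensor integral. Instead it proves local estimates on $\varepsilon$-boxes (Theorems \ref{thm: mixed norm}, \ref{thm: off diagonal}, \ref{thm: on diagonal}), obtained by $TT^*$ in one variable at a time combined with Hardy--Littlewood--Sobolev, with the lower bound $|D_dS|\gtrsim\varepsilon^{\alpha-\mathbf 1}$ from Lemma \ref{lemma: lb} supplying the parameter $\mu$. It then sums over the dyadic decomposition using the convex optimization Lemma \ref{lem: lin op general} and Corollary \ref{cor: linop}; all of the Newton polyhedron geometry and the precise $\log$ count live in that optimization step, not in a scalar asymptotic. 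Your plan has no analogue of this step for case (i), so the claimed identification ``Newton distance of $\Phi$ along $(1-P+p_j^{-1})$ equals $\delta/2^{d-1}$'' would need a careful proof (and note the direction you name lies in $\R^d$, whereas the Newton distance of $\Phi$ is a quantity in $\R^{d\cdot2^{d-1}}$; the dictionary between $\cn(\Phi)$ and $\cn(S)$ is left unstated).

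Finally, for case (ii), the paper explicitly warns that interpolation of the global case-(i) conclusions does not reach the on-diagonal result (since $P>1$ there), and it proves a separate on-diagonal local estimate by a parallel induction (Theorem \ref{thm: on diagonal}), interpolating the $d(d+1)$ mixed-norm bounds that arise from the $TT^*$ step \emph{before} summing dyadically. You flag the need for local interpolation at the very end, which is the right instinct, but the proposal as written does not supply the on-diagonal local base estimate on which the whole dyadic summation depends.
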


    The reader should consult Corollary \ref{cor: linop} to see that a sharper result holds in in the case $\delta>2^{d-1}$ (strictly smaller power of $\log,$ depending on the Newton polyhedron of $S$). There is an additional subtlety when $\delta=2^{d-1}$ that appeared even in \cite{pss01}, where like Phong-Stein-Sturm, we also require more than the expected $d-1$ many factors of $\log(2+|\lambda|)$.
    
	Note that in the off-diagonal version, $P$ is always less than or equal to $1,$ but in the on-diagonal version, $P$ can be $d\frac{2^{d-1}-1}{(d-1)2^{d-1}}>1,$ and therefore we cannot simply interpolate the results under hypothesis $(i)$ to obtain the results of the theorem under hypothesis $(ii).$ 
	
	It is worth noting that indeed $\delta<\infty$ exists, as the reader may check that for all $j$, the quantity $1-P+p_j^{-1}$ is positive (and nondegeneracy implies $\delta>0$).
	
	As the classical Loomis-Whitney inequality states that $\int_{\R^d}\prod_{j=1}^df_j(\hat{x}_j)dx\leq\prod_{j=1}^d\|f_j\|_{q_j}$ with $q_j\equiv d-1$, one might be surprised to see the exponent 2 in our off-diagonal estimates when $d\geq4$. However, this inequality holds for a much greater range of exponents $q_j$ when the domain is compact, e.g., (8,4,4,2). (See \cite{BCCT} for a general theory describing this phenomenon as well as the relation between exponents for oscillatory and non-oscillatory multilinear integrals.)

	As a standard application of our main theorems, we obtain the sublevel set estimate of Corollary \ref{cor: sublevel} by applying Lemma \ref{lem: sublevel}.

    \begin{lemma}\label{lem: sublevel}
	Let $S:\mathbb{R}^d\to\mathbb{R}$ be real analytic in a neighborhood of the origin, let $\pi_j:\mathbb{R}^d\to\mathbb{R}^{d_j}$ be surjective linear maps, and let $B$ be a sufficiently small neighborhood of the origin.

Suppose there exists a constant $C<\infty$ and $M(\lambda)$ such that for all $f_j\in L^{p_j}(\mathbb{R}^{d_j})$,

\begin{equation*}
    \Bigg{|}\int_B e^{i\lambda S(x)}\prod_{j=1}^n(f_j\circ \pi_j)(x) dx \Bigg{|}\le C M(\lambda)\prod_{j=1}^n\|f_j\|_{p_j}.
\end{equation*}


Furthermore, suppose $M(\lambda)$ satisfies

\begin{itemize}
\item[(i)] $M(\lambda)\le (1+|\lambda|)^{-\delta}$ for some $\delta>0$, and
\item[(ii)] $M(\lambda_1\lambda_2)\le A M(\lambda_1) M(\lambda_2)$ for some $A>0$ independent of $\lambda.$
\end{itemize}

Then, there exists $C'$ independent of $\varepsilon$ such that for all measurable functions $g_j:\mathbb{R}^{d_j}\to\mathbb{R}$,

\begin{equation*}
    |\{x\in B:|S(x)-\sum_{j=1}^n(g_j\circ \pi_j)(x)|<\epsilon\}|\leq C'M(\epsilon^{-1})
\end{equation*}
\end{lemma}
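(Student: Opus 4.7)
The plan is to reduce the sublevel set estimate to the hypothesized oscillatory bound via a Fourier representation of a smooth majorant for $\mathbf{1}_{|G|<\varepsilon}$, where $G(x) := S(x) - \sum_{j=1}^n g_j(\pi_j(x))$. The hypothesized estimate will then be applied to phases of the form $e^{i\lambda G(x)}$, and submultiplicativity of $M$ will let us extract the desired factor $M(\varepsilon^{-1})$.

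First I would fix a nonnegative $\phi \in C_c^\infty(\R)$ with $\phi \geq \mathbf{1}_{[-1,1]}$; since $\phi$ is smooth and compactly supported, $\hat\phi$ is Schwartz and hence in $L^1(\R)$. Writing the Fourier inversion $\phi(t) = \int \hat\phi(\xi) e^{i\xi t} d\xi$ and applying Fubini (which is justified since $\hat\phi \in L^1$ and $B$ is bounded), we obtain
\[
|E_\varepsilon| \;\leq\; \int_B \phi\bigl(G(x)/\varepsilon\bigr)\,dx \;=\; \int_\R \hat\phi(\xi) \int_B e^{i(\xi/\varepsilon) G(x)}\,dx\,d\xi.
\]

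Next I would apply the hypothesized oscillatory estimate to the inner integral. For fixed $\xi$, set $\lambda := \xi/\varepsilon$ and $f_j(y) := e^{-i\lambda g_j(y)}\,\mathbf{1}_{\pi_j(B)}(y)$. Since $B$ is bounded, each $\pi_j(B)$ has finite measure, so $f_j \in L^{p_j}(\R^{d_j})$ with $\|f_j\|_{p_j} = |\pi_j(B)|^{1/p_j}$; moreover, for every $x \in B$, $\pi_j(x) \in \pi_j(B)$, which forces $f_j(\pi_j(x)) = e^{-i\lambda g_j(\pi_j(x))}$ and hence $e^{i\lambda S(x)}\prod_{j} f_j(\pi_j(x)) = e^{i\lambda G(x)}$ on $B$. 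The hypothesis therefore gives $\bigl|\int_B e^{i\lambda G(x)}\,dx\bigr| \leq C_B\,M(\xi/\varepsilon)$, where $C_B := C\prod_j |\pi_j(B)|^{1/p_j}$.

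Finally I would invoke submultiplicativity. Hypothesis (ii) with $\lambda_1 = 1/\varepsilon$ and $\lambda_2 = \xi$ yields $M(\xi/\varepsilon) \leq A\,M(1/\varepsilon)\,M(\xi)$, and bounding $M(\xi) \leq 1$ from (i),
\[
|E_\varepsilon| \;\leq\; C_B A M(1/\varepsilon) \int_\R |\hat\phi(\xi)|\,M(\xi)\,d\xi \;\leq\; C_B A \|\hat\phi\|_{L^1}\,M(\varepsilon^{-1}),
\]
giving the lemma with $C' := C_B A \|\hat\phi\|_{L^1}$. The only delicate point is ensuring the oscillatory hypothesis applies with our chosen $f_j$; this motivates the cutoff $\mathbf{1}_{\pi_j(B)}$, which puts $f_j$ in $L^{p_j}$ without altering the integrand on $B$. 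Beyond this, the sole nontrivial move is the Fourier representation that converts the sublevel set into a superposition of oscillatory integrals — everything else is bookkeeping, and I expect no serious obstacles.
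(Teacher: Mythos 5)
Your overall strategy — majorizing $\mathbf{1}_{|G|<\varepsilon}$ by a smooth bump $\phi(G(x)/\varepsilon)$, Fourier-expanding $\phi$, and feeding $f_j = e^{-i\lambda g_j}\mathbf{1}_{\pi_j(B)}$ into the oscillatory hypothesis — is precisely the standard reduction that the paper defers to (Section 7 of Christ--Li--Tao--Thiele), and your handling of the cutoff $\mathbf{1}_{\pi_j(B)}$, the Fubini justification, and the identity $e^{i\lambda S}\prod_j f_j(\pi_j(x)) = e^{i\lambda G(x)}$ on $B$ are all correct. So the structure of the argument is right.

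There is, however, a gap in how you use (ii). You apply submultiplicativity with $\lambda_1=1/\varepsilon$ and $\lambda_2=\xi$ for every $\xi\in\R$, and then discard $M(\xi)\leq 1$. But for the functions $M$ this lemma is intended for — $M(\lambda)\approx(1+|\lambda|)^{-\alpha}\log^k(2+|\lambda|)$ and its cousins — condition (ii) fails whenever $|\lambda_1\lambda_2|\ll\max(|\lambda_1|,|\lambda_2|)$, e.g.\ $\lambda_1=1/\varepsilon$, $\lambda_2=\varepsilon$: the left side is $M(1)\gtrsim 1$ while the right is $\approx M(1/\varepsilon)\to 0$. If you insist on reading (ii) as holding for all real $\lambda_1,\lambda_2$, your deduction is formally valid but the lemma then applies to essentially no $M$ of interest; read as intended (roughly, both $|\lambda_i|\gtrsim 1$), the chain $M(\xi/\varepsilon)\leq AM(1/\varepsilon)M(\xi)\leq AM(1/\varepsilon)$ is unjustified for $|\xi|\lesssim 1$. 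The fix is to split the $\xi$-integral: on $|\xi|\geq 1$ your submultiplicativity step is fine; on $|\xi|<1$ use the Schwartz bound on $\hat\phi$ together with (i) directly, writing
\[
\int_{|\xi|<1}|\hat\phi(\xi)|\,M(\xi/\varepsilon)\,d\xi \;\leq\; \|\hat\phi\|_\infty\,\varepsilon\!\int_{|u|<1/\varepsilon}\!M(u)\,du \;\lesssim\; \varepsilon\cdot(1/\varepsilon)^{1-\delta}\;=\;\varepsilon^{\delta},
\]
(valid for $\delta<1$, which holds in the paper's applications since $\delta\leq 2^{1-d}$), and then compare $\varepsilon^{\delta}$ with $M(1/\varepsilon)$. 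This last comparison is where the argument implicitly uses a matching lower bound $M(1/\varepsilon)\gtrsim\varepsilon^{\delta}$, which holds for the specific $M$'s the paper plugs in but is not among the stated hypotheses. Flagging and handling this low-frequency regime is the one missing piece in your write-up.
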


Typically, one takes $M(\lambda)=\lambda^{-\alpha}$ for some $\alpha>0$, though we allow for greater generality to handle the factors of $\log\lambda$ coming from Theorem \ref{thm: main theorem}.
	
	\begin{corollary}\label{cor: sublevel}
		Let $d\geq 1$ and let $B\subset \R^d$ be a bounded set which is sufficiently close to the origin in the sense of Theorem \ref{thm: main theorem}. Suppose $S(x)$ is a phase satisfying the hypotheses of Theorem \ref{thm: main theorem}. Then there exists $C>0$ such that for all functions $g_j:\R^{d-1}\rightarrow\R$,
		\begin{equation}\label{eq: sublevel cor}
			|\{x\in B:|S(x)-\sum_jg_j(\pi_j(x))|<\epsilon\}|\leq C\begin{cases}
				\epsilon^{1/\delta}\log^{d}(\epsilon^{-1})  & \text{ if } \delta \ge 2^{d-1},\\
				\epsilon^{1/2^{d-1}} & \text{ if }\delta < 2^{d-1},
			\end{cases}
		\end{equation}
		where $\delta(1,\dots,1)\in\partial\cn(S)$.
	\end{corollary}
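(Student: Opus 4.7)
The plan is to derive Corollary \ref{cor: sublevel} from Theorem \ref{thm: main theorem} via the standard bootstrap provided by Lemma \ref{lem: sublevel}. First I would specialize Theorem \ref{thm: main theorem} to $p_1 = \cdots = p_d = \infty$. Since $\infty \ge p(d)$, the on-diagonal hypothesis (ii) is satisfied, and with these exponents $P = 0$ so $1-P+p_j^{-1} = 1$ for every $j$. Hence the condition $\delta(1-P+p_1^{-1},\dots,1-P+p_d^{-1}) \in \partial\cn(S)$ collapses exactly to the assumption $\delta(1,\dots,1) \in \partial\cn(S)$ of the corollary, and the theorem produces
\begin{equation*}
|\Lambda_d(f_1,\dots,f_d)| \lesssim M(\lambda) \prod_{j=1}^d \|f_j\|_\infty,
\end{equation*}
where $M(\lambda) = |\lambda|^{-1/\delta}\log^{d}(2+|\lambda|)$ if $\delta \ge 2^{d-1}$ and $M(\lambda) = |\lambda|^{-1/2^{d-1}}$ otherwise.

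Next I would verify that this $M$ satisfies hypotheses (i) and (ii) of Lemma \ref{lem: sublevel} so that it applies with $p_j \equiv \infty$. Hypothesis (i), the polynomial decay $M(\lambda) \lesssim (1+|\lambda|)^{-\eta}$ for some $\eta > 0$, is routine since any positive power of $(1+|\lambda|)^{-1}$ swallows the logarithmic factor; one may take $0 < \eta < 1/\delta$ in the first case and $\eta = 1/2^{d-1}$ in the second. Hypothesis (ii), the submultiplicativity $M(\lambda_1\lambda_2) \le AM(\lambda_1)M(\lambda_2)$, is immediate in the pure-power regime; in the logarithmic regime it reduces to
\begin{equation*}
\log^{d}(2+|\lambda_1\lambda_2|) \le A \log^{d}(2+|\lambda_1|)\log^{d}(2+|\lambda_2|),
\end{equation*}
which I would deduce from $2+|\lambda_1\lambda_2| \le (2+|\lambda_1|)(2+|\lambda_2|)$, giving $\log(2+|\lambda_1\lambda_2|) \le \log(2+|\lambda_1|) + \log(2+|\lambda_2|)$, combined with the elementary observation that $a + b \le C\, ab$ whenever $a, b \ge \log 2$.

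Once both hypotheses are in hand, Lemma \ref{lem: sublevel} directly yields $|\{x\in B : |S(x)-\sum_j g_j(\pi_j(x))| < \epsilon\}| \le C' M(\epsilon^{-1})$, which upon substitution of the two forms of $M$ is exactly \eqref{eq: sublevel cor}. The whole argument is essentially mechanical once the correct exponents are chosen; the only place where any care is required is the submultiplicativity check, which is elementary but must be executed with a constant $A$ uniform in $\lambda_1, \lambda_2$.
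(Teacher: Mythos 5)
Your proposal is correct and follows exactly the route the paper intends: the paper itself only says ``As a standard application of our main theorems, we obtain the sublevel set estimate of Corollary \ref{cor: sublevel} by applying Lemma \ref{lem: sublevel},'' and you have carried this out by specializing Theorem \ref{thm: main theorem} to $p_1=\cdots=p_d=\infty$ (so $P=0$, the $\delta$-condition collapses to $\delta(1,\dots,1)\in\partial\cn(S)$), and then verifying hypotheses (i) and (ii) of Lemma \ref{lem: sublevel}, with the submultiplicativity of $\log^d(2+|\lambda|)$ handled via $2+|\lambda_1\lambda_2|\le(2+|\lambda_1|)(2+|\lambda_2|)$. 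The only blemish is in the verification of (i): $M(\lambda)=|\lambda|^{-1/\delta}\log^d(2+|\lambda|)$ does not literally satisfy $M(\lambda)\le(1+|\lambda|)^{-\eta}$ near $\lambda=0$; however, the paper's prototype $M(\lambda)=\lambda^{-\alpha}$ has the same small-$\lambda$ issue, so this reflects an implicit convention of the lemma (that it concerns $|\lambda|\gtrsim 1$, or equivalently one may cap $M$ at a constant) rather than a gap in your argument.
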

    Lemma \ref{lem: sublevel} was implicitly proven in section 7 of Christ-Li-Tao-Thiele\cite{CLTT}.
	One could also compare Corollary \ref{cor: sublevel} to the sublevel set estimates in Greenblatt\cite{gr10}, or the one in \cite{pss01}, where Phong-Stein-Sturm consider perturbations of the form $g(x_j)$ for polynomial $S.$
	
	Nondegenerate phases are, in a sense, generic. Thus, Corollary \ref{cor: sublevel} may be interpreted as saying that generically, the product of areas of projections onto coordinate hyperplanes of a real analytic function has very large area compared to the volume of the sublevel set of the function. To see this, take $g_j\equiv0$ so the left hand side of \eqref{eq: sublevel cor} becomes the volume of the sublevel set, and observe that our nondegenerate phases are 0 on coordinate hyperplanes. Compare this with the Loomis-Whitney inequality: characteristic functions of boxes are maximizers of Loomis-Whitney, and the Loomis-Whitney inequality compares sizes of projections to the volume. Theorem \ref{thm: main theorem} provides extra decay in the inequality, and therefore quantifies in a certain sense how far boxes are from varieties of those real analytic functions perturbed by locally measurable $g(\hat{x}_j)$.
	
	In Section \ref{sec: background}, we formally define the Newton polyhedron and our notion of nondegeneracy. Using these principles, we describe some intuition for the statement of Theorem \ref{thm: main theorem} and show it is sharp when $\delta\geq2^{d-1}$. We also take this as an opportunity to prove growth estimates which follow from our definition of nondegeneracy and will be useful later on. The lower bound on $S$ in small regions is one of the more subtle arguments required to make the theorem work in its full generality.
	
	We split the proof of our local estimates into Sections \ref{sec: d=3} for the $d=3$ case and \ref{sec: higher d} for the $d>3$ case. The reason is that in inducting on dimension, there is a single $d=2$ result to refer to; however, the resulting theorem in $d=3$ naturally features mixed norms. This result implies one with equal exponents (on-diagonal) and unequal exponents (off-diagonal), neither of which implies the other. Once we have made this split, there are two separate theorems in each dimension to induct on. Much care is required with these interpolations and applications of basic inequalities, in particular with the management of mixed norms; there are many choices to make, and if any were made in the wrong order, we would not be able to arrive at our full range of exponents in Theorem \ref{thm: main theorem}.
	
	In Section \ref{sec: main proof}, we decompose our domain into dyadic boxes, applying our local estimates on each one and optimizing over the sum, as in \cite{GiGrXi18}. 
	
	\subsection{Notation and conventions}
	Here we list a small but important index of notation used throughout.
	\begin{itemize}
		\item We include zero in the natural numbers: $\N=\{0,1,2,\dots\}.$
		\item If $c$ is a constant, we denote by bold $\boldsymbol{c}$ the vector $(c,\dots,c)\in \R^d.$ 
		\item In general, if $\alpha\in\N^d,$ the notation $\partial^\alpha=\partial_{x_1}^{\alpha_1}\cdots\partial_{x_d}^{\alpha_d}=\partial_1^{\alpha_1}\cdots\partial_d^{\alpha_d}$ is common. Due to ambiguities resulting from $\partial^\one$, we additionally define the operator $D_d=\partial_1\cdots\partial_d$.
		\item Due to symmetry of the scalar $\lambda$ appearing in $\Lambda_d$, we restrict our attention to $\lambda>2$, avoiding writing $|\lambda|$ each time.
		\item If $x\in\R^n$ and $y\in\R^m$, and $f:\R^n\times\R^m\to \R$ is such that the following integral is well-defined, we denote the mixed $L^p_x(L^q_y)$ norm of $f$ by
		\begin{equation*}
			\|f\|_{L^p_x L^q_y}=\Big{\|}\|f(x,\cdot)\|_{L^q}\Big{\|}_{L^p} = \left(\int_{\R^n}\left(\int_{\R^m} |f(x,y)|^q dy\right)^{p/q}dx\right)^{1/p}.
		\end{equation*}
		\item Whenever exponents $p_1,\dots,p_d$ are clear from context, we define $P=\sum_{j=1}^d p_j^{-1}.$
		\item Given a phase $S(x)$, and exponents $(p_1,\dots, p_d)$, let $\delta$ be such that 
		\begin{equation*}
			\delta(1-P+p_1^{-1},\dots, 1-P+p_d^{-1})\in\partial \cn(\phi).
		\end{equation*} 
		We define $\delta$ to be \textit{the Newton distance of $S$ with respect to} $(p_1,\dots, p_d)$. We call $\delta$ \textit{the Newton distance with respect to} $(p_1,\dots, p_d)$, or briefly \textit{the Newton distance}, when the context is clear.
	\end{itemize}

	\textbf{Acknowledgments}: The authors would like to thank Michael Christ, Philip Gressman, Ilya Kachkovskiy, and Willie Wong for many enlightening discussions.

	\section{Background}\label{sec: background}
	
	\subsection{Nondegeneracy and the Newton polyhedron}
	Let $S:\R^d\to\R$ be real analytic in a neighborhood containing the origin. Then, in a small box containing the origin, $S$ can be expressed as a power series $\sum_{\alpha\in(\N\setminus\{0\})^d}c_\alpha x^\alpha + \tilde{S}(x)$, where $\tilde{S}(x)$ may be written in the form $\sum_{j=1}^d g_j(\hat{x}_j)$ for functions $g_j$. The $\tilde{S}(x)$ part of the phase may be absorbed into the functions $f_j$; thus, it does not alter the decay rate in $\lambda$ corresponding to $S(x)$, so from here on we assume $\tilde{S}= 0$ (see the nondegeneracy condition of \cite{CLTT} for more details).
	
	The Newton polyhedron of $S$ is defined to be the convex hull of
	\begin{equation*}
		\bigcup_{c_\alpha\neq 0} \alpha+\R_+^d.
	\end{equation*}
	For compact faces $F\subset\cn(S),$ we define the polynomial $S_F(x)=\sum_{\alpha\in F} c_\alpha x^\alpha$. In this paper, we say that a real analytic function $S:\R^d\to\R$ is nondegenerate if for all $x$ not contained in coordinate hyperplanes, for all compact faces $F\subset\cn(S),$ the polynomial $S_F$ satisfies $D_dS_F(x)\neq 0.$ This nonvanishing ensures that, in some sense, we avoid cancellation and the phases behaves like its leading terms. Alternatively, one could hypothesize the growth condition found in Lemma \ref{lemma: lb}, but in practice the above condition is easier to verify.
	
	There is another nondegeneracy condition analogous to the one in Phong-Stein-Sturm \cite{pss01} that might sacrifice the range of exponents for uniform estimates: one could define a reduced Newton polyhedron from vertices $\alpha\in(\N\setminus\{0\})^d$ such that $|\partial^\alpha S(x)|\ge 1.$  One such example is building a polyhedron from the single vertex $\alpha=(1,\dots,1)$ which is equivalent to the condition $|D_dS(x)|\ge 1.$

	\subsection{Intuition: How all such multilinear estimates should depend on \texorpdfstring{$\cn(S)$}{N(S)}}
	
	Varchenko showed that for $S$ satisfying an analogous nondegeneracy condition,
	\begin{equation*}
		\int_{\R^d} e^{i\lambda S(x)}\psi(x)dx\lesssim \lambda^{-1/\delta}\log^{k-1}(\lambda),
	\end{equation*}
	as $|\lambda|\to\infty,$  where $\delta$ is the smallest real number such that $\boldsymbol{\delta}$ lies in $\cn(S)$, and $k\le d$ is the greatest codimension over all faces $F\subset \cn(S)$ containing $\boldsymbol{\delta}.$ Varchenko also proved both $k$ and the decay rate $\lambda^{-1/\delta}$ are the best possible if $\delta>1.$
	
	The first author showed in his thesis\cite{Gi18} that for any $\lambda>2$, $\beta\in(-1,\infty)^d$, and $S(x)$ satisfying Varchenko's condition,
	\begin{equation}\label{eq: Max's thesis}
		\int_{\R^d} e^{i\lambda S(x)}x^{\beta}\psi(x)dx\lesssim \lambda^{-1/\delta_\beta}\log^{k-1}(\lambda),
	\end{equation}
	where $\delta_\beta$ is the smallest such that $\delta_\beta(\beta+\bone)\in \cn(S)$ and $k\le d$ is the greatest codimension over all faces $F\subset \cn(S)$ containing $\delta_\beta(\beta+\bone).$ 
	
	Using this scalar estimate, we now heuristically derive an upper bound for the decay rate of $\Lambda_d$. (By an upper bound for the decay rate, we mean the operator norm cannot decay any faster.) Using the scalar estimate above is a good heuristic because some phases satisfying Varchenko's nondegeneracy condition also satisfy ours.  Let $p_1,\dots,p_d$ satisfy $1-P+p_j^{-1}>0$ for $1\le j\le d.$ Let $\gamma>0$ be a small constant and define functions $f_j$ by
	\begin{equation*}
		(x_1\cdots x_{j-1}x_{j+1}\cdots x_d)^{-1/p_j+\gamma/(d-1)}=f_j(\hat{x}_j)\in L^{p_j}(\R^{d-1}).
	\end{equation*}
	Taking the product of the $f_j$, each $x_j$ receives exponents $\gamma/(d-1)-p_k^{-1}$ for $k\neq j,$ so the product equals
	\begin{equation*}
		\prod_{j=1}^d f_j(\hat{x}_j)=\prod_{j=1}^d (x_1\cdots x_{j-1}x_{j+1}\cdots x_d)^{-1/p_j+\gamma/(d-1)}=\prod_{j=1}^dx_j^{\gamma-P+1/p_j}.
	\end{equation*}
	Let $\beta$ be the vector defined componentwise by $\beta_j=(\gamma-P+p_j^{-1}).$ Then, by \eqref{eq: Max's thesis}, we expect
	\begin{equation*}
		\int_{\R^d} e^{i\lambda S(x)}\prod_{j=1}^d f_j(\hat{x}_j)\psi(x)dx\lesssim \lambda^{-1/\delta_\gamma}\log^{k-1}(\lambda)\lesssim \lambda^{-1/\delta_\gamma}\log^{k-1}(\lambda) \prod_{j=1}^d \|f_j\|_{p_j}
	\end{equation*}
	where $\delta_\gamma(1-P+p_1^{-1}+\gamma,\dots, 1-P+p_d^{-1}+\gamma)\in\partial\cn(S).$ Letting $\gamma\to 0,$ we expect the best exponent of $\lambda$ to be $-1/\delta$, where $\delta(1-P+p_1^{-1},\dots, 1-P+p_d^{-1})\in\partial\cn(S)$. Whenever $\delta\ge 2^{d-1},$ this is precisely the estimate we prove, and in this case we also prove this decay is maximal for $\Lambda_d$.  (In general our nondegeneracy condition does not imply that of Varchenko, hence this derivation is indeed heuristic.) Although it is very involved to examine when exactly this estimate is sharp in the scalar case, in our multilinear Loomis-Whitney scenario we can prove that the sharp exponent of $\lambda$ must be precisely what this heuristic predicts. We prove this claim in the next subsection. This provides evidence that the approach above may be useful for understanding the decay rates of a wide range of oscillatory integral operators.

	\subsection{Sharpness of Theorem \ref{thm: main theorem}}
	
	Since $\cn(S)$ is a polyhedron in $\R^d$ not containing $\bz$ (by nondegeneracy), its supporting hyperplanes may be defined by 
	\begin{equation*}
		H_\mathbf{n}=\{\alpha\in\R^d: \alpha\cdot \mathbf{n} = 1\},
	\end{equation*}
	where $\cdot$ is the usual inner product on $\R^d$. In particular, if $\alpha\in \partial\cn(S),$ any codimension 1 face containing $\alpha$ is a subset of the supporting hyperplane $H_{\bsb{n}}$ where $\alpha\cdot\bsb{n}=1.$ Let $\delta$ be the Newton distance of $S$ with respect to $(p_1,\dots, p_d)$. We claim that, up to $\log(\lambda)$ terms, if
	\begin{equation*}
		|\Lambda_d(f_1,\dots,f_d)|\lesssim \lambda^{-r}\prod_{j=1}^d\|f_j\|_{p_j}
	\end{equation*}
	uniformly in $f_j\in L^{p_j}$, then $r\le 1/\delta,$ and therefore $-1/\delta$ is the sharp exponent if such an estimate holds (in particular, the sharp exponent in Theorem \ref{thm: main theorem} whenever $\delta\ge 2^{d-1}$). Let $\bsb{n}$ be such that $\alpha\cdot\bsb{n}=1$, and for $1\le j\le d$ let 
	\begin{equation*}
		f_j(\hat{x}_j)=\prod_{k\neq j}^d\one_{[0,\lambda^{-n_k}]}(x_k).
	\end{equation*}
	Then for large $\lambda,$ up to logarithmic factors, 
	\begin{multline*}
		\prod_{j=1}^d\|f_j\|_{p_j}=\prod_{j=1}^d \lambda^{- (\one\cdot\bsb{n})/p_j+n_j/p_j}\\
		=\lambda^{-(P-1/p_1,\dots, P-1/p_d)\cdot\bsb{n}}=\lambda^{\alpha\cdot\bsb{n}/\delta -\one\cdot\bsb{n}} = \lambda^{1/\delta-\one\cdot\bsb{n}}.
	\end{multline*}
	One can also compute
	\begin{equation*}
		|\Lambda_d(f_1,\dots,f_d)|\approx \lambda^{- \one\cdot\bsb{n}}.
	\end{equation*}
	Therefore, the quotient, $\lambda^{-1/\delta}$, serves as a bound for the decay rate.

	We just proved Theorem \ref{thm: main theorem} is sharp in the exponent of $\lambda$ whenever $\delta\ge 2^{d-1}.$ For example, if $p_j=2^{j}$ for $1\le j\le d-1$ and $p_d=2^{d-1}$ in Theorem \ref{thm: main theorem}, hypothesis $(i)$, or if $p_j=p(d)$ for all $j$ under hypothesis $(ii)$, the decay in $\lambda$ is sharpest possible (over all nondegenerate phases $S$). However, the exponent of $\log$ is not sharp in general: if $\delta> 2^{d-1},$ Corollary \ref{cor: linop} asserts the exponent of $\log$ is even better than stated: it is $k-1$ where $k$ is the largest codimension over any face in $\cn(S)$ containing the vector $\delta(1-P+p_1^{-1},\dots, 1-P+p_d^{-1})$ (which is still not sharpest possible in general, e.g., the estimate for $S(x,y,z)=xyz$ has no $\log$ terms for the smallest exponents in Theorem \ref{thm: main theorem}).
	
	As the following proposition shows, one could interpolate any sharp estimate resulting from Theorem \ref{thm: main theorem} the Loomis-Whitney inequality with all exponents equal $d-1$ to obtain sharp decay estimates for other choices of exponents. However, this statement generally fails to hold for other versions of Loomis-Whitney on compact domains.
		
	\begin{proposition}\label{prop: interpolation sharpness}
		Assume that, up to logarithmic factors,
		\begin{equation*}
			|\Lambda_d(f_1,\dots,f_d)|\lesssim \lambda^{-1/\delta}\prod_{j=1}^d \|f_j\|_{p_j},
		\end{equation*}
		where $\delta$ is the Newton distance with respect to $(p_1,\dots, p_d).$ Assume there are $0<\theta_1, \theta_2<1$ summing to 1 such that
		\begin{equation*}
			q_j^{-1}=\theta_1p_j^{-1}+\theta_2(d-1)^{-1}.
		\end{equation*}
		Up to logarithmic factors,
		\begin{equation*}
			|\Lambda_d(f_1,\dots,f_d)|\lesssim \lambda^{-1/\delta'} \prod_{j=1}^d \|f_j\|_{q_j},
		\end{equation*}
		where $\delta'$ is the Newton distance with respect to $(q_1,\dots, q_d).$
	\end{proposition}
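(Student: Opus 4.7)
The plan is to apply standard multilinear complex interpolation (multilinear Riesz--Thorin) between the hypothesized oscillatory estimate and the classical Loomis--Whitney inequality, and then verify by a short linear-algebra computation that the interpolated decay exponent coincides with the Newton distance associated to the interpolated Lebesgue exponents.

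\textbf{Step 1: Identify the endpoints.} The first endpoint is the hypothesis of the proposition, $|\Lambda_d(f_1,\dots,f_d)|\lesssim \lambda^{-1/\delta}\prod_j\|f_j\|_{p_j}$, viewed (up to logarithmic factors) as operator norm $\lambda^{-1/\delta}$ at exponents $(p_1,\dots,p_d)$. The second endpoint is obtained by bounding $|e^{i\lambda S(x)}\psi(x)|\le \|\psi\|_\infty$ pointwise and applying the classical Loomis--Whitney inequality, giving $|\Lambda_d(f_1,\dots,f_d)|\le \|\psi\|_\infty \prod_j\|f_j\|_{d-1}$ at $(d-1,\dots,d-1)$ with no dependence on $\lambda$. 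Multilinear complex interpolation with parameters $\theta_1,\theta_2>0$ summing to $1$ then yields, up to logarithmic factors,
\begin{equation*}
|\Lambda_d(f_1,\dots,f_d)|\lesssim \lambda^{-\theta_1/\delta}\prod_{j=1}^d \|f_j\|_{q_j},
\end{equation*}
where $1/q_j=\theta_1/p_j+\theta_2/(d-1)$ is exactly the relation assumed in the proposition.

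\textbf{Step 2: Identify $\theta_1/\delta$ with $1/\delta'$.} Writing $P=\sum p_j^{-1}$ and $Q=\sum q_j^{-1}=\theta_1 P+\theta_2 d/(d-1)$, one uses $\theta_1+\theta_2=1$ to compute, componentwise,
\begin{equation*}
1-Q+q_j^{-1} = 1-\theta_1 P-\theta_2\tfrac{d}{d-1}+\theta_1 p_j^{-1}+\theta_2\tfrac{1}{d-1} = \theta_1(1-P+p_j^{-1}).
\end{equation*}
Hence the two direction vectors $(1-Q+q_j^{-1})_j$ and $(1-P+p_j^{-1})_j$ are positive scalar multiples of each other, so the open rays they generate coincide and meet $\partial\cn(S)$ at a single common point. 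If $\delta(1-P+p_j^{-1})_j\in\partial\cn(S)$, then $(\delta/\theta_1)(1-Q+q_j^{-1})_j$ is the very same boundary point, so by definition of the Newton distance with respect to $(q_1,\dots,q_d)$ one has $\delta'=\delta/\theta_1$, i.e.\ $\theta_1/\delta=1/\delta'$.

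The logarithmic factors interpolate as $(\log\lambda)^{k\theta_1}$ and are absorbed into ``up to logarithmic factors,'' so there is no essential obstacle; the only content is the two-line computation in Step 2 which shows that the ray direction defining the Newton distance transforms compatibly under the Riesz--Thorin interpolation of exponents. Combining Steps 1 and 2 yields the claimed bound.
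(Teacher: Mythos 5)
Your proof is correct and takes essentially the same route as the paper's: multilinear interpolation to obtain the decay rate $\lambda^{-\theta_1/\delta}$ followed by the same componentwise computation showing $1-Q+q_j^{-1}=\theta_1(1-P+p_j^{-1})$ and hence $\delta'=\delta/\theta_1$. The only difference is cosmetic: you make explicit that the second endpoint of the interpolation is the classical Loomis--Whitney bound at $(d-1,\dots,d-1)$ with no $\lambda$-decay, whereas the paper leaves this implicit in its proof (it is stated only in the sentence preceding the proposition).
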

	
	The sharpness of the conclusion follows from previous examples.
	
	\begin{proof}
		By multilinear interpolation,
		\begin{equation*}
			|\Lambda_d(f_1,\dots,f_d)|\lesssim \lambda^{-\theta_1/\delta} \prod_{j=1}^d \|f_j\|_{q_j}.
		\end{equation*}
		
		To prove the claim, we simply need to show that $\delta'=\delta/\theta_1$ is the Newton distance with respect to $(q_1,\dots,q_d)$. 
		
		Letting $Q=\sum_{j=1}^d q_j^{-1},$ it is easy to see that $Q=\theta_1 P + \theta_2d(d-1)^{-1}$, and therefore
		\begin{multline*}
			1-Q+q_j^{-1}=1-\left(\theta_1 P + \theta_2d(d-1)^{-1}\right)+ \left(\theta_1p_j^{-1}+\theta_2(d-1)^{-1}\right)\\
			=\theta_1(1-P+p_j^{-1})+\theta_2(1-d(d-1)^{-1}+(d-1)^{-1})=\theta_1(1-P+p_j^{-1}).
		\end{multline*}
		Multiplying both sides by $\delta/\theta_1$ shows
		\begin{multline*}
			(\delta/\theta_1)\left(1-Q+q_1^{-1},\dots,1-Q+q_d^{-1}\right)\\
			= \delta\left(1-P+p_1^{-1},\dots,1-P+p_d^{-1}\right)\in\partial\cn(S),
		\end{multline*}
		establishing the claim. 
	\end{proof}
	We note that the choice of exponents uniformly equal to $d-1$ was essential in removing $\theta_2$ from the computation of $1-Q+q_j^{-1}$, since there is no reason for $\delta/\theta_1$ to be the Newton distance with respect to $(q_1,\dots,q_d)$ in general. For details about the interpolation theorem used above, see \cite{BeLo76}*{Chapter 4}.

	\subsection{Growth Estimates}\label{GE}
	
	\begin{proposition}
		Let $S:\R^d\to\R$ be real analytic in a neighborhood of the origin. For all $\beta\in\N^d$, there is a constant $C$ depending on $\beta$ and derivatives of $S$ such that for all $x$ close enough to the origin,
		\begin{equation*}
			|x^\beta \partial^\beta S(x)|\le C \max_{\alpha\in \cn(S)} x^\alpha.
		\end{equation*}
	\end{proposition}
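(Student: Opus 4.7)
The plan is to expand $S$ as a power series about the origin and bound each term of the resulting series for $x^\beta\partial^\beta S(x)$ by a multiple of $M(x):=\max_{\alpha\in\cn(S)}x^\alpha$, with enough geometric decay in $|\alpha|$ that the series sums to $CM(x)$. First I would reduce to $x$ in the positive octant of a small polydisc $\{0<x_j\le c_0\}$ (the right-hand side depends only on $|x_j|$, so this costs nothing) and differentiate termwise to obtain
\begin{equation*}
x^\beta \partial^\beta S(x) = \sum_{\alpha\ge\beta} c_\alpha \frac{\alpha!}{(\alpha-\beta)!} x^\alpha,
\end{equation*}
noting that every summand has $c_\alpha\neq 0$, hence $\alpha\in\cn(S)$, and that $M(x)$ is attained on the finite vertex set of $\cn(S)$ when $0<x_j\le 1$.

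The heart of the argument is a refinement of the trivial inequality $x^\alpha\le M(x)$. Using the polyhedral description of $\cn(S)$, I would decompose each $\alpha\in\cn(S)$ as $\alpha=\sum_i t_i v_i+w$, where the $v_i$ are the finitely many vertices of $\cn(S)$, $t_i\ge 0$, $\sum_i t_i=1$, and $w\in\R_+^d$. The weighted AM--GM inequality then gives $x^{\sum_i t_iv_i}=\prod_i(x^{v_i})^{t_i}\le M(x)$, while $x^w\le c_0^{|w|}$ for $0<x_j\le c_0<1$. Since $|w|=|\alpha|-\sum_i t_i|v_i|\ge |\alpha|-V$ with $V:=\max_i|v_i|$, this upgrades the estimate to
\begin{equation*}
x^\alpha\le M(x)\,c_0^{\max(|\alpha|-V,\,0)},
\end{equation*}
providing geometric decay in $|\alpha|$ beyond the bounded range $|\alpha|\le V$.

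Finally I would substitute this bound back into the series. Analyticity of $S$ in some polydisc of radius $r_0>0$ yields Cauchy-type bounds $|c_\alpha|\le Br_0^{-|\alpha|}$, and the combinatorial factor satisfies $\frac{\alpha!}{(\alpha-\beta)!}\le (|\alpha|+1)^{|\beta|}$, so the entire series is dominated by
\begin{equation*}
M(x)\cdot Bc_0^{-V}\sum_{\alpha\in\N^d}(|\alpha|+1)^{|\beta|}(c_0/r_0)^{|\alpha|}.
\end{equation*}
Choosing $c_0<\min(1,r_0)$ makes the ratio $c_0/r_0<1$, so the geometric factor dominates the polynomial weight in $|\alpha|$ and the sum converges to a constant $C=C(\beta,S)$. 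The only real obstacle is securing the refined bound with the extra factor $c_0^{|\alpha|-V}$: without it, naively pulling $M(x)$ out of the series would leave the divergent tail $\sum|c_\alpha|\tfrac{\alpha!}{(\alpha-\beta)!}$. Once the vertex decomposition $\alpha=\sum_i t_iv_i+w$ is in place, the rest is routine bookkeeping.
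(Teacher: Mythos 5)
Your proof is correct, and it handles the tail of the power series by a somewhat different mechanism than the paper. Both arguments rest on the same core observation — that for $0<|x_j|<1$ the vertices of $\cn(S)$ maximize $|x^\alpha|$ over $\alpha\in\cn(S)$, and that termwise differentiation of the power series keeps the coefficient support inside $\cn(S)$. The paper then truncates the Taylor series at a finite degree $N$ chosen beyond all vertex degrees, writes the remainder as $\sum_{|\alpha|=N}h_{\alpha,\beta}(x)x^\alpha$ with $h_{\alpha,\beta}\to 0$, and absorbs it into the truncated sum for $x$ small enough; this is short but qualitative, relying on the $o(1)$ nature of the remainder. You instead keep the full series and upgrade the trivial bound $x^\alpha\le M(x)$ to a quantitative one, $x^\alpha\le M(x)\,c_0^{\max(|\alpha|-V,0)}$, by decomposing $\alpha\in\cn(S)$ as a convex combination of vertices plus an $\R_+^d$ vector and applying weighted AM--GM; combined with Cauchy estimates $|c_\alpha|\lesssim r_0^{-|\alpha|}$ and the polynomial bound on $\alpha!/(\alpha-\beta)!$, this gives a convergent majorant for the whole series. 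Your version is more explicit and self-contained — it yields a computable constant and avoids the slightly delicate step of comparing a $\ge N$th-order remainder against a finite Taylor polynomial — at the cost of requiring the Minkowski-sum description $\cn(S)=\mathrm{conv}\{v_i\}+\R_+^d$, which is standard but which you should cite or verify. One small point to make explicit: the termwise-differentiation identity $x^\beta\partial^\beta S(x)=\sum_{\alpha\ge\beta}c_\alpha\frac{\alpha!}{(\alpha-\beta)!}x^\alpha$ is justified by uniform convergence of the derived series on a sub-polydisc, which follows from the same Cauchy estimates you invoke later.
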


	\begin{proof}
		Since $\cn(S)$ has finitely many vertices, let $N$ be such that $|\alpha|\geq N$ implies $\alpha$ is not a vertex of $\cn(S).$ Since $S$ is real analytic,
		\begin{equation*}
			S(x)=\sum_{|\alpha|\le N} c_\alpha x^\alpha + \sum_{|\alpha|=N}h_\alpha(x)x^\alpha
		\end{equation*}
		where $|h_\alpha(x)|\to 0$ as $x\to \bz.$ Therefore, for all $\beta\in \N^d,$ we also have a Taylor series expansion
		\begin{equation*}
			x^\beta\partial^\beta S(x) = \sum_{|\alpha|\le N} c_{\alpha,\beta}x^\alpha + \sum_{|\alpha|=N} h_{\alpha,\beta}(x)x^\alpha,
		\end{equation*}
		where $|h_{\alpha,\beta}(x)|\to 0$ as $x\to\bz.$ 
		Using this fact, for each $\beta$, for $x$ small enough, the error terms $h_{\alpha,\beta}$ are small enough such that
		\begin{equation*}
			|x^\beta\partial^\beta S(x)| \le 2\sum_{|\alpha|\le N} |c_{\alpha,\beta}x^\alpha|\le C_\beta\max_{\text{Vertices }\alpha\in\cn(S)}|x^\alpha|\le  C_\beta \max_{\alpha\in\cn(S)} |x^\alpha|.
		\end{equation*}
		The second inequality follows from the fact that among $\alpha\in\cn(S)$, the vertices of $\cn(S)$ maximize $|x^\alpha|$ for $|x_j|<1$.
	\end{proof}
	Taking a finite collection of derivatives leads to the following corollary:
	\begin{corollary}\label{cor:up bound}
		Let $\beta^1,\dots,\beta^k\in\N^d.$ There is a constant $C$ such that for all $x$ close enough to the origin, for all $1\le i\le k,$
		\begin{equation*}
			|\partial^{\beta^i} S(x)| \le  C\max_{\alpha\in\cn(S)} |x^{\alpha-\beta^i}|.
		\end{equation*}
	\end{corollary}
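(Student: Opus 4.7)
The plan is to reduce Corollary \ref{cor:up bound} directly to the preceding proposition, invoked once for each of the finitely many multi-indices $\beta^1,\dots,\beta^k$. For each fixed $i$, the proposition produces a neighborhood $U_i$ of the origin and a constant $C_{\beta^i}$ such that
\begin{equation*}
|x^{\beta^i}\partial^{\beta^i} S(x)|\le C_{\beta^i}\max_{\alpha\in\cn(S)}|x^\alpha|\qquad\text{for all } x\in U_i.
\end{equation*}
Because the index set $\{1,\dots,k\}$ is finite, I can intersect to obtain a single neighborhood $U=\bigcap_{i=1}^k U_i$ of the origin and set $C=\max_i C_{\beta^i}$, so that all $k$ inequalities hold simultaneously with a uniform constant on $U$.

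The remaining step is purely algebraic: divide each inequality through by $|x^{\beta^i}|$ to obtain
\begin{equation*}
|\partial^{\beta^i} S(x)|\le C\,\frac{\max_{\alpha\in\cn(S)}|x^\alpha|}{|x^{\beta^i}|}=C\max_{\alpha\in\cn(S)}|x^{\alpha-\beta^i}|,
\end{equation*}
where the last equality uses that $|x^{\beta^i}|$ does not depend on $\alpha$ and can therefore be pulled inside the maximum.

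The only genuine subtlety is that the division by $|x^{\beta^i}|$ requires $x$ to lie off the coordinate hyperplanes when $\beta^i$ has positive entries; but this is harmless, since the monomials $x^{\alpha-\beta^i}$ appearing on the right-hand side may themselves have negative exponents and so are naturally interpreted only on the complement of the coordinate hyperplanes. This matches the setting in which the corollary will be used later (our nondegeneracy hypothesis is formulated off the coordinate hyperplanes, and the dyadic decomposition in Section \ref{sec: main proof} will work with boxes whose closures avoid them). I do not anticipate a hard step: essentially everything has been done by the proposition, and the corollary is just a packaging of its content into a form convenient for later derivative estimates.
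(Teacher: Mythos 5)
Your proposal is correct and matches the paper's intent exactly: the paper simply remarks ``Taking a finite collection of derivatives leads to the following corollary'' without giving details, and your argument (apply the proposition once per $\beta^i$, intersect the finitely many neighborhoods, take the max of the constants, and divide by $|x^{\beta^i}|$) is precisely the intended filling-in of that remark.
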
	
	For the lower bound on derivatives of $S$, we refer to the following lemma.
	\begin{lemma}\label{lemma: lb}
		Assume $S:\R^d\to\R$ is nondegenerate. Then for all $\varep\in(0,1)^d$ close enough to the origin, there is a uniform constant $C$ independent of $\varep$ such that for all $x\in\prod_{j=1}^d[\varep_j,4\varep_j]$ and all $\alpha\in\cn(S),$
		\begin{equation*}
			|D_dS(x)|\geq C\varep^{\alpha-\bone}.
		\end{equation*}
	\end{lemma}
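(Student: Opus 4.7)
The plan is to prove this by a compactness-and-contradiction argument after rescaling. Substituting $y_j = x_j/\varep_j$ so that $y\in[1,4]^d$ and expanding termwise yields
$$D_d S(x) = \sum_\alpha c_\alpha \alpha_1\cdots\alpha_d\, \varep^{\alpha-\bone} y^{\alpha-\bone},$$
where the sum runs over $\alpha\in(\N\setminus\{0\})^d$ with $c_\alpha\neq 0$. Since $\cn(S)\subseteq\{\alpha:\alpha_j\geq 1\}$ and $\varep\in(0,1)^d$, the quantity $V(\varep):=\max_{\alpha\in\cn(S)}\varep^{\alpha-\bone}$ (attained on a compact face) dominates every individual $\varep^{\alpha-\bone}$ for $\alpha\in\cn(S)$, so it suffices to establish the single lower bound $|D_d S(x)|\geq C V(\varep)$ uniformly for $\varep$ close to the origin and $y\in[1,4]^d$.

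I would then argue by contradiction. If no such uniform $C$ exists, there are sequences $\varep^{(k)}\to\bz$ and $y^{(k)}\in[1,4]^d$ with $|D_d S(x^{(k)})|/V(\varep^{(k)})\to 0$. Setting $t^{(k)}=-\log\varep^{(k)}$ and $\omega^{(k)}=t^{(k)}/|t^{(k)}|_1\in\Delta^{d-1}$, the normal fan of $\cn(S)$ partitions $\Delta^{d-1}$ into finitely many cells, one per compact face. By passing to subsequences, one may assume $y^{(k)}\to y^*\in[1,4]^d$, $\omega^{(k)}\to\omega^*$, the argmax face $F^{\max}(\varep^{(k)})$ is a constant face $F_0$, and the second-order residual $|t^{(k)}|_1(\omega^{(k)}-\omega^*)$ converges coordinatewise in $[-\infty,\infty]^d$ to some $\tau^*$. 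Let $F^*\supseteq F_0$ denote the compact face of $\cn(S)$ that minimizes $\omega^*\cdot\alpha$.

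Factoring out $V(\varep^{(k)})=(\varep^{(k)})^{\alpha_0-\bone}$ for any $\alpha_0\in F_0$, the normalized quotient becomes
$$\frac{D_d S(x^{(k)})}{V(\varep^{(k)})} = \sum_\alpha c_\alpha\alpha_1\cdots\alpha_d\, (\varep^{(k)})^{\alpha-\alpha_0}(y^{(k)})^{\alpha-\bone}.$$
For $\alpha\notin F^*$, strict positivity of $\omega^*\cdot(\alpha-\alpha_0)$ forces $(\varep^{(k)})^{\alpha-\alpha_0}\to 0$. For $\alpha\in F^*$, the weight converges to $e^{-\tau^*\cdot(\alpha-\alpha_0)}\in[0,1]$. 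Iterating the subsequence extraction if needed, the support set $F^{**}:=\{\alpha\in F^*:\tau^*\cdot(\alpha-\alpha_0)<\infty\}$ may be arranged to coincide with a compact face of $\cn(S)$. Passing to the limit, the quotient tends to $e^{\tau^*\cdot(\alpha_0-\bone)}D_d S_{F^{**}}(z^*)$ for some $z^*\in(0,\infty)^d\setminus\{\text{coordinate hyperplanes}\}$ obtained from $y^*$ by finite positive rescaling; by the nondegeneracy hypothesis, this is nonzero, contradicting the assumption.

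The delicate part will be the careful handling of boundary cases in the normal fan: when $\omega^*$ lies on the boundary between several cells, the argmax face jumps from $F_0$ up to the strictly larger $F^*$, and one must track $\tau^*$ to identify the correct face $F^{**}\subseteq F^*$ on which nondegeneracy can be invoked; some iteration of the subsequence argument may be required to guarantee $F^{**}$ is actually a face rather than a non-convex slice. A secondary technical concern is uniform absolute convergence of the infinite series in $\alpha$ needed to justify passage to the limit, which I would handle by shrinking the neighborhood of the origin so that $4\varep$ lies within the radius of convergence of $S$ and by invoking the analytic bounds on derivatives of $S$ from Corollary~\ref{cor:up bound}.
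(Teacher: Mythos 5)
Your compactness-and-contradiction strategy via the normal fan is the right one; the paper does not reproduce the proof but defers it to Lemma 2.1 of the first author's thesis, which takes essentially the same route, so your outline is aligned with the intended argument. The reduction to a single lower bound $|D_dS(\varep y)|\geq C V(\varep)$ for $y\in[1,4]^d$, the passage to logarithmic coordinates, the subsequence extraction stabilizing the argmax face and the limiting direction $\omega^*$, and the invocation of nondegeneracy on a compact face are all correct moves.

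There is, however, a concrete gap in how you handle the secondary residual $\tau^{(k)}=|t^{(k)}|_1(\omega^{(k)}-\omega^*)$ and identify $F^{**}$. Since $\omega^{(k)}$ and $\omega^*$ lie on the simplex, $\sum_j\tau^{(k)}_j=0$, so whenever some coordinate $\tau^*_j$ is $+\infty$ another must be $-\infty$. In that regime the quantity $\tau^*\cdot(\alpha-\alpha_0)$ you use to define $F^{**}$ is formally $\infty-\infty$, your limiting point (implicitly $z^*_j=e^{-\tau^*_j}y^*_j$) has $z^*_j\in\{0,\infty\}$ in exactly those coordinates so $z^*$ lands on a coordinate hyperplane where nondegeneracy says nothing, and the prefactor $e^{\tau^*\cdot(\alpha_0-\bone)}$ is likewise ill-defined; the claim that $z^*$ arises from $y^*$ by a finite positive rescaling therefore does not follow from your construction. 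The repair is to work with the gaps $g^{(k)}_\alpha := t^{(k)}\cdot(\alpha-\alpha_0)\geq 0$ directly: after passing to a subsequence so that each $g^{(k)}_\alpha\to g^*_\alpha\in[0,\infty]$, set $F^{**}:=\{\alpha\in\cn(S):g^*_\alpha<\infty\}$. This is automatically a compact face of $\cn(S)$ (if $\alpha=\theta\beta+(1-\theta)\beta'$ with $0<\theta<1$ then nonnegativity of the gaps forces $g^*_\beta,g^*_{\beta'}<\infty$, and any ray direction $\bee_j$ is excluded because $t^{(k)}_j\to\infty$), so the worry in your final paragraph about a non-convex slice and further iterations does not arise. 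Moreover $\alpha\mapsto g^*_\alpha$, being a limit of affine functionals, is affine on $F^{**}$; extending its linear part from the span of $\{\alpha-\alpha_0:\alpha\in F^{**}\}$ to a genuinely finite vector $t^*\in\R^d$ and setting $z^*_j:=e^{-t^*_j}y^*_j\in(0,\infty)$ identifies the limit of the normalized quotient as $e^{t^*\cdot(\alpha_0-\bone)}D_dS_{F^{**}}(z^*)$, a finite nonzero multiple of a quantity that is nonzero by nondegeneracy. With this repair your argument closes.
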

	
	The proof is nearly identical to the proof of \cite{Gi18}*{Lemma 2.1}, so we won't recreate it here. Moreover, \lq\lq close enough\rq\rq can be quantified by analyzing the argument in \cite{Gi18}. For a more heuristic argument, see \cite{GiGrXi18}*{Lemma 3.1}.

	\section{Local Estimates for \texorpdfstring{$d=3$}{d=3}}\label{sec: d=3}
	
	In this section, we prove local estimates for the $d=3$ case using the following local estimates for the $d=2$ case.
	
	For $f\in C_0^\infty (\R)$ define the oscillatory integral operator 
	\begin{equation*}
		T(f)(x)=\int_{-\infty}^\infty e^{i\lambda S(x,y)}\psi(x,y)f(y)dy,
	\end{equation*}
	where $\psi$ is a smooth compactly supported function on $\R^2.$ The key operator van der Corput lemma we use for the first step of our induction argument can be found in Greenblatt\cite{gr04}, and has been previously studied by H\"ormander\cite{ho71} and Phong-Stein\cite{ps94}\cite{ps94op} under various other hypotheses.
	\begin{lemma}[Operator van der Corput]\label{lemma: Phong-Stein}
		Consider the operator $T$ defined directly above. Assume that $S(x,y)$ is smooth and
		\begin{itemize}
			\item[(a)] $\psi$ is supported in a rectangle, having width $b$ in the $y-$direction;
			\item[(b)] there is a constant $C_1>0$ such that $|\partial_y^n\psi(x,y)|\le C_1 b^{-n}$ for $n=0,1,2$;
			\item[(c)] there are constants $C_2, \mu>0$ such that $S(x,y)$ satisfies 
			\begin{equation*}
				\Bigg{|}\frac{\partial^{2}S}{\partial x\partial y}\Bigg{|}\ge\mu,\hspace{1cm} \Bigg{|}\frac{\partial^{3}S}{\partial x\partial y^2}\Bigg{|}\le C_2\mu b^{-1},\text{ and } \Bigg{|}\frac{\partial^{4}S}{\partial x\partial y^3}\Bigg{|}\le C_2\mu b^{-2}
			\end{equation*} 
			in the support of $\psi$.
		\end{itemize}
		Then there is a constant $C$ depending only on $C_1, C_2$ such that for $f,g\in C_0^\infty(\R)$,
		\begin{equation*}
			\left|\langle Tf,g\rangle\right|=\Bigg{|}\int_{\R^2} e^{i\lambda S(x,y)}\psi(x,y) f(y)g(x)dxdy\Bigg{|} \leq C (\lambda\mu)^{-1/2}\|f\|_2\|g\|_2,
		\end{equation*}
	\end{lemma}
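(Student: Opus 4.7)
The natural plan is the $TT^*$ method. By Cauchy--Schwarz, $|\langle Tf,g\rangle|\le\|Tf\|_2\|g\|_2$, and $\|T\|_{L^2\to L^2}^2=\|TT^*\|_{L^2\to L^2}$, where $TT^*$ is the integral operator in $x$ with kernel
\begin{equation*}
K(x,x')=\int_{\R}e^{i\lambda(S(x,y)-S(x',y))}\psi(x,y)\overline{\psi(x',y)}\,dy.
\end{equation*}
It therefore suffices to show $\sup_{x}\int|K(x,x')|\,dx'\lesssim(\lambda\mu)^{-1}$ and then invoke Schur's test, since taking the square root recovers the desired $(\lambda\mu)^{-1/2}$ estimate.

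I would produce two complementary bounds on $|K(x,x')|$. The trivial bound $|K(x,x')|\lesssim b$ follows from (b) and the fact that the integrand is supported in an interval of length $\lesssim b$ in $y$. For the oscillatory bound I would integrate by parts \emph{twice} in $y$. The key observation is that by the mean value theorem in the $x$ variable applied using the first inequality of (c),
\begin{equation*}
|\partial_y[S(x,y)-S(x',y)]|\ge\mu|x-x'|,
\end{equation*}
so the phase $\Phi(y)=\lambda(S(x,y)-S(x',y))$ satisfies $|\Phi'|\gtrsim\lambda\mu|x-x'|$. The remaining inequalities of (c) and the mean value theorem yield $|\Phi''|\lesssim\lambda\mu b^{-1}|x-x'|$ and $|\Phi'''|\lesssim\lambda\mu b^{-2}|x-x'|$. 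Each integration by parts divides the amplitude by $\Phi'$ and generates new terms involving $\Phi''/\Phi'$, $(\Phi'')^2/(\Phi')^2$, and $\Phi'''/\Phi'$, together with successive $y$-derivatives of $\psi(x,\cdot)\overline{\psi(x',\cdot)}$ controlled by (b). A careful accounting shows that after two integrations by parts every term in the resulting amplitude is bounded by $b^{-2}(\lambda\mu|x-x'|)^{-2}$; integrating over the $y$-interval of length $\lesssim b$ gives
\begin{equation*}
|K(x,x')|\lesssim\frac{1}{b(\lambda\mu|x-x'|)^{2}}.
\end{equation*}

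Combining, $|K(x,x')|\lesssim\min\bigl(b,\,b^{-1}(\lambda\mu|x-x'|)^{-2}\bigr)$, with the two regimes meeting at $|x-x'|\sim(\lambda\mu b)^{-1}$. Performing the Schur integral on each regime, the trivial part contributes $b\cdot(\lambda\mu b)^{-1}=(\lambda\mu)^{-1}$ and the oscillatory tail contributes $\int_{(\lambda\mu b)^{-1}}^{\infty}\frac{dr}{b(\lambda\mu r)^{2}}=(\lambda\mu)^{-1}$, so that $\sup_{x}\int|K(x,x')|\,dx'\lesssim(\lambda\mu)^{-1}$ with no logarithmic correction. Note that a single integration by parts would have produced only $|K|\lesssim(\lambda\mu|x-x'|)^{-1}$, whose Schur integral has a logarithmic divergence at infinity --- this is exactly why the lemma requires two integrations by parts and hence the third-order bounds in (c).

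The main obstacle is the bookkeeping for the second integration by parts: one must verify that every term produced --- in particular the cross-term $\Phi'''/(\Phi')^{2}$, the quadratic term $(\Phi'')^{2}/(\Phi')^{3}$, and the terms involving $\partial_{y}\psi$ and $\partial_{y}^{2}\psi$ --- is uniformly controlled by $b^{-2}(\lambda\mu|x-x'|)^{-2}$. The three scaling bounds in (c) together with the two amplitude bounds in (b) are precisely those needed to make every such term balance against the corresponding power of $|\Phi'|$, which explains their particular form.
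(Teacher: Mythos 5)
The paper does not actually prove Lemma~\ref{lemma: Phong-Stein}; it cites it from Greenblatt~\cite{gr04}, noting the lineage through H\"ormander~\cite{ho71} and Phong--Stein~\cite{ps94, ps94op}. Your $TT^*$ plus kernel-estimate plus Schur-test argument is the standard route to this result in those references, and the bookkeeping you sketch is sound: after two integrations by parts the amplitude decomposes into the terms
\begin{equation*}
\frac{a''}{(\Phi')^2},\quad \frac{a'\Phi''}{(\Phi')^3},\quad \frac{a\Phi'''}{(\Phi')^3},\quad \frac{a(\Phi'')^2}{(\Phi')^4},
\end{equation*}
and with $|a^{(n)}|\lesssim b^{-n}$, $|\Phi'|\gtrsim\lambda\mu|x-x'|$, $|\Phi''|\lesssim\lambda\mu b^{-1}|x-x'|$, $|\Phi'''|\lesssim\lambda\mu b^{-2}|x-x'|$ (the latter two also via the mean value theorem in $x$ using the second and third bounds of (c)), each is $\lesssim b^{-2}(\lambda\mu|x-x'|)^{-2}$, yielding $|K|\lesssim\min\bigl(b,\,b^{-1}(\lambda\mu|x-x'|)^{-2}\bigr)$ and the Schur bound $\lesssim(\lambda\mu)^{-1}$ exactly as you compute. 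Your remark on why one integration by parts is insufficient (a logarithmic divergence in the Schur integral) is also the correct motivation for the third-order hypotheses. The only point worth spelling out is that passing from $|\partial_x\partial_y S|\ge\mu$ to $|\partial_y(S(x,y)-S(x',y))|\ge\mu|x-x'|$ requires $\partial_x\partial_y S$ to have a fixed sign on the segment from $(x',y)$ to $(x,y)$; this is automatic when the lower bound holds on the full (connected) rectangle containing the support, which is the situation in the paper's application, but is implicit rather than explicit in your write-up.
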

	
	In this section, we will prove a version of the above operator van der Corput lemma for $\Lambda_3$. In it, and in the higher dimensional versions found in the next section, we will need a generalization of the hypotheses of Lemma \ref{lemma: Phong-Stein}. For this purpose, we say that {\it $\Lambda_d$ satisfies the local estimate hypotheses in $x_j$ with parameter $\mu>0$} if 
	\begin{enumerate}
		\item $\psi(x)$ is supported on a rectangle of width $b_j$ in the $x_j-$direction.
		\item There is a constant $C_1>0$ such that $|\partial_{x_j}^n\psi(x)|\leq C_1b_j^{-n}$ for $n=0,1,2$.
		\item There exists constant $C_2>0$ such that
		\begin{equation*}
			|D_dS|\ge\mu,\hspace{1cm} |\partial_{x_j}D_dS|\le C_2\mu b_j^{-1},\text{ and } |\partial_{x_j}^2D_dS|\le C_2\mu b_j^{-2}
		\end{equation*} 
		in the support of $\psi$.
	\end{enumerate}
	
	Assuming the local estimate hypothesis leads to the following local theorem for $d=3:$
	
	\begin{theorem}\label{thm: mixed norm}
		Suppose that $\Lambda_3$ satisfies the local estimate hypotheses in $x_1$ with parameter $\mu>0$. 
		
		Then, there exists $C>0$ depending only on $C_1,C_2$, and $d$ such that
		\begin{equation*}
			|\Lambda_3(f_1,f_2,f_3)|\le C (\lambda\mu)^{-1/4}\|f_1\|_{L^4_{x_1}L^{8/3}_{x_3}}\|f_2\|_{L^4_{x_2}L^{8/3}_{x_3}}\|f_3\|_{2}.
		\end{equation*}
	\end{theorem}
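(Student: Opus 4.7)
The plan is to reduce to the two-dimensional Lemma \ref{lemma: Phong-Stein} via a $TT^*$-style argument that uses $x_3$ as the ``extra'' direction. First I would apply Cauchy--Schwarz in $(x_1, x_2)$ to peel off $f_3$:
\[
|\Lambda_3(f_1, f_2, f_3)| \le \|f_3\|_{L^2(\R^2)} \cdot \|H\|_{L^2(\R^2)},
\]
where $H(x_1, x_2) := \int_{\R} e^{i\lambda S(x_1,x_2,x_3)} \psi(x) f_1(x_2,x_3) f_2(x_1,x_3)\, dx_3$. It then suffices to prove $\|H\|_{L^2}^2 \lesssim (\lambda\mu)^{-1/2}\|f_1\|^2\|f_2\|^2$ in the advertised mixed norms, as taking the square root recovers the $(\lambda\mu)^{-1/4}$ rate.

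Expanding $\|H\|_{L^2}^2$ yields a four-fold integral over $(x_1, x_2, x_3, x_3')$ with the symmetrized phase $\Phi := S(x_1,x_2,x_3) - S(x_1,x_2,x_3')$. Applying the mean value theorem in $x_3$ to $\partial_{x_1}\partial_{x_2}S$ gives $\partial_{x_1}\partial_{x_2}\Phi = (x_3 - x_3')\, D_3 S(x_1, x_2, \xi)$ for some $\xi$ between $x_3$ and $x_3'$, so $|\partial_{x_1}\partial_{x_2}\Phi| \ge \mu|x_3-x_3'|$. The same MVT applied to $\partial_{x_1}^n D_3 S$ shows that the derivative conditions of Lemma \ref{lemma: Phong-Stein} hold for $\Phi$ in $(x_1, x_2)$, viewing $x_1$ as the ``$y$''-variable, with effective parameter $\mu' := \mu|x_3 - x_3'|$. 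For each fixed pair $(x_3, x_3')$, the lemma therefore bounds the inner $(x_1, x_2)$-integral by
\[
(\lambda\mu|x_3-x_3'|)^{-1/2}\, \|f_1(\cdot, x_3) f_1(\cdot, x_3')\|_{L^2_{x_2}}\, \|f_2(\cdot, x_3) f_2(\cdot, x_3')\|_{L^2_{x_1}}.
\]

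It remains to integrate this estimate over $(x_3, x_3')$. Cauchy--Schwarz splits each inner $L^2$ product into $\|f_j(\cdot, x_3)\|_{L^4}\|f_j(\cdot, x_3')\|_{L^4}$, after which the resulting double integral has the Riesz-type kernel $|x_3-x_3'|^{-1/2}$. The one-dimensional Hardy--Littlewood--Sobolev inequality bounds this by $C\|AB\|_{L^{4/3}_{x_3}}^2$, where $A(x_3) := \|f_1(\cdot,x_3)\|_{L^4}$ and $B(x_3) := \|f_2(\cdot,x_3)\|_{L^4}$; a final application of H\"older gives $\|AB\|_{L^{4/3}} \le \|A\|_{L^{8/3}}\|B\|_{L^{8/3}}$, producing the mixed norms in the order $L^{8/3}_{x_3} L^4$. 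By Minkowski's integral inequality (since $8/3 \le 4$), these are dominated by the mixed norms in the order $L^4 L^{8/3}_{x_3}$ stated in the theorem.

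The main obstacle is verifying that the hypotheses of Lemma \ref{lemma: Phong-Stein} transfer cleanly to $\Phi$ with the scaled parameter $\mu' = \mu|x_3-x_3'|$: the point is that the MVT scales both the lower bound $|D_3 S|\ge \mu$ and the upper bounds $|\partial_{x_1}^n D_3 S|\le C_2\mu b_1^{-n}$ by the same factor $|x_3-x_3'|$, so the ratios controlling the derivative hypotheses of the 2D lemma are preserved. One must also check that the product cutoff $\psi(x_1,x_2,x_3)\psi(x_1,x_2,x_3')$ satisfies the required width and derivative bounds in $x_1$, which is immediate from Leibniz. Beyond those verifications, the remaining delicacy is the sequencing of Cauchy--Schwarz, HLS, H\"older, and Minkowski so that the mixed norms emerge in the prescribed order---the sort of bookkeeping the introduction warns about.
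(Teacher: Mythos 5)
Your overall architecture (Cauchy--Schwarz in $(x_1,x_2)$ to peel off $f_3$, expanding $\|H\|_2^2$, MVT to get the scaled parameter $\mu|x_3-x_3'|$, applying Lemma \ref{lemma: Phong-Stein} in $(x_1,x_2)$ fiberwise, then integrating over $(x_3,x_3')$ with HLS) matches the paper's proof through the application of the operator van der Corput lemma. The gap is in the final norm bookkeeping, and it is not a typo: your last step asserts an inequality that goes the wrong way.

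After applying Cauchy--Schwarz in $x_2$ to split $\|f_1(\cdot,x_3)\overline{f_1}(\cdot,x_3')\|_{L^2_{x_2}}\le A(x_3)A(x_3')$ with $A(x_3)=\|f_1(\cdot,x_3)\|_{L^4_{x_2}}$, then HLS and H\"older in $x_3$, you land on $\|A\|_{L^{8/3}_{x_3}}=\|f_1\|_{L^{8/3}_{x_3}L^4_{x_2}}$. The theorem's norm is $\|f_1\|_{L^4_{x_2}L^{8/3}_{x_3}}$. Minkowski's integral inequality for mixed norms says that if $q\le p$ then
\begin{equation*}
\|f\|_{L^p_x L^q_y}\le \|f\|_{L^q_y L^p_x},
\end{equation*}
i.e.\ the norm with the \emph{larger} exponent on the outside is the \emph{smaller} one. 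With $q=8/3\le p=4$ this gives $\|f_1\|_{L^4_{x_2}L^{8/3}_{x_3}}\le \|f_1\|_{L^{8/3}_{x_3}L^4_{x_2}}$, which is the reverse of what you claim. So your argument proves a strictly weaker estimate (bounded by the larger norm) that does not imply the theorem; there is no Minkowski step that can recover the stated result from your intermediate bound.

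The issue is the order in which Cauchy--Schwarz is applied. By factoring the inner $L^2_{x_2}$ norm into two $L^4_{x_2}$ norms \emph{before} integrating in $(x_3,x_3')$, you force the $L^4$ in $x_2$ to sit inside the $L^{8/3}$ in $x_3$, which is the wrong nesting. The paper instead first applies Cauchy--Schwarz in the $(x_3,y)$ variables (splitting the kernel $|x_3-y|^{-1/2}$ as $|x_3-y|^{-1/4}\cdot|x_3-y|^{-1/4}$) to separate the $F_1$ and $F_2$ factors:
\begin{equation*}
(\lambda\mu)^{-1/2}\Big(\iint|x_3-y|^{-1/2}\|F_{1,x_3,y}\|_{L^2_{x_2}}^2\,dx_3\,dy\Big)^{1/2}\Big(\iint|x_3-y|^{-1/2}\|F_{2,x_3,y}\|_{L^2_{x_1}}^2\,dx_3\,dy\Big)^{1/2},
\end{equation*}
and then, for each factor, writes out $\|F_{1,x_3,y}\|_{L^2_{x_2}}^2=\int|f_1(x_2,x_3)|^2|f_1(x_2,y)|^2\,dx_2$, uses Fubini to pull the $x_2$-integral outside, and applies HLS \emph{pointwise in $x_2$}. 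This produces $\int\|f_1(x_2,\cdot)\|_{L^{8/3}_{x_3}}^4\,dx_2=\|f_1\|_{L^4_{x_2}L^{8/3}_{x_3}}^4$ directly, with the $L^{8/3}$ correctly on the inside. Rewiring your last paragraph to perform Cauchy--Schwarz in $(x_3,x_3')$ first, then Fubini and fiberwise HLS, closes the gap.
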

	
	\begin{proof}
		Applying the $TT^*$-method to the function $f_3$, we have
		\begin{multline}\label{eq:tt*}
			|\Lambda_3(f_1,f_2,f_3)|^2\leq\|f_3\|_2^2\iiiint e^{i\lambda(S(x_1,x_2,x_3)-S(x_1,x_2,y))}
			f_1(x_2,x_3)\\
			\times\overline{f_1}(x_2,y)f_2(x_1,x_3)\overline{f_2}(x_1,y)\psi(x_1,x_2,x_3)\overline{\psi}(x_1,x_2,y)dx_1dx_2dx_3dy.
		\end{multline}
		
		Let 
		\begin{align*}
			S_{x_3,y}(x_1,x_2)&=S(x_1,x_2,x_3)-S(x_1,x_2,y),\\
			F_{1,x_3,y}(x_2)&=f_1(x_2,x_3)\overline{f_1}(x_2,y),\\
			F_{2,x_3,y}(x_1)&=f_2(x_1,x_3)\overline{f_2}(x_1,y),\text{ and let}\\ \psi_{x_3,y}(x_1,x_2)&=\psi(x_1,x_2,x_3)\overline{\psi}(x_1,x_2,y).
		\end{align*}. 
		Thus, the right hand side of \eqref{eq:tt*} becomes
		\begin{equation}\label{eq: post sub}
			\iint\left[\iint e^{i\lambda S_{x_3,y}(x_1,x_2)}
			F_{1,x_3,y}(x_2) F_{2,x_3,y}(x_1)\psi_{x_3,y}(x_1,x_2)dx_1dx_2\right]dx_3dy.
		\end{equation}
		
		We want to apply the operator van der Corput lemma to \eqref{eq: post sub} in the $(x_1,x_2)$ variables for each $(x_3,y)$. In order to do so, we must check that the local estimate hypotheses for $S$ imply local estimate hypotheses for $S_{x_3,y}$.
		
		First, it is clear that if $\psi(x_1,x_2,x_3)$ has support contained in a rectangle of width $b_1$ in the $x_1$-direction, then so does $\psi_{x_3,y}$. Second, the smoothness of $\psi$ guarantees uniformity of the constant $C_1$. Third, we see that
		\begin{equation*}
			\partial_{x_1}\partial_{x_2}S_{x_3,y}(x_1,x_2)=\int_y^{x_3} D_3S(x_1,x_2,t)dt,
		\end{equation*}
		so by the Mean Value Theorem, $|D_2S_{x_3,y}(x_1,x_2)|\geq \mu|x_3-y|$ uniformly in $x_1,x_2$. Bounds for the $x_1$-derivatives of $S_{x_3,y}$ follow similarly. Thus, $S_{x_3,y}$ satisfies the local estimate hypotheses with parameter $\mu|x_3-y|$.
		
		Now applying Lemma \ref{lemma: Phong-Stein}, we control \eqref{eq: post sub} by
		\begin{equation*}
			\iint|\lambda\mu(x_3-y)|^{-1/2}\|F_{1,x_3,y}\|_{L^2_{x_2}}\|F_{2,x_3,y}\|_{L^2_{x_1}}dx_3dy.
		\end{equation*}
		By the Cauchy-Schwarz inequality, this is controlled by
		\begin{multline*}
			(\lambda\mu)^{-1/2}\left(\iint|x_3-y|^{-1/2}\|F_{1,x_3,y}\|_{L^2_{x_2}}^2dx_3dy\right)^{1/2}\\ 
			\times\left(\iint|x_3-y|^{-1/2}\|F_{2,x_3,y}\|_{L^2_{x_1}}^2dx_3dy\right)^{1/2}.
		\end{multline*}
		Both integral factors above are essentially the same, so we focus our attention on the first. By Fubini's theorem and the Hardy-Littlewood-Sobolev inequality,
		\begin{align*}
			\begin{split}
				\iint|x_3-y|^{-1/2}\|F_{1,x_3,y}&\|_{L^2_{x_2}}^2dx_3dy\\
				&=\int\left(\iint |x_3-y|^{-1/2}f_1(x_2,x_3)^2\overline{f_1}(x_2,y)^2dx_3dy\right)dx_2
			\end{split}\\
			&\leq\int \|f_1^2(x_2,x_3)\|_{L^{4/3}_{x_3}}^2dx_2\\
			&=\int \|f_1(x_2,x_3)\|_{L^{8/3}_{x_3}}^4dx_2\\
			&=\|f_1\|_{L^4_{x_1}L^{8/3}_{x_3}}^4.
		\end{align*}
		
		Plugging the norm bounds back in to \eqref{eq:tt*} and taking square roots, the estimate of Theorem \ref{thm: mixed norm} is established.
	\end{proof}
	
	Theorem \ref{thm: mixed norm} has the following consequences. 
	
	\begin{corollary}\label{cor:off diag d=3}
		Suppose that $\Lambda_3$ satisfies the local estimate hypotheses in $x_1, x_2$, and $x_3$ with parameter $\mu$, and furthermore, that $\psi$ is supported on a rectangle of dimensions $b_1\times b_2\times b_3$.
		
		Then, 
		\begin{equation*}
			|\Lambda_3(f_1,f_2,f_3)|\le(b_3)^{1/4} (\lambda\mu)^{-1/4}\|f_1\|_{4}\|f_2\|_{4}\|f_3\|_{2}.
		\end{equation*}
	\end{corollary}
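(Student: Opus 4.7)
The plan is to reduce Corollary \ref{cor:off diag d=3} to Theorem \ref{thm: mixed norm} by converting the mixed-norm right-hand side into a pure $L^4\times L^4\times L^2$ estimate. The key observation is that the compact support of $\psi$ in the $x_3$ direction allows us to apply H\"older's inequality in $x_3$ to replace the $L^{8/3}_{x_3}$ norms by $L^4_{x_3}$ norms, paying a factor of $b_3^{1/8}$ each time.

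More concretely, I would first invoke Theorem \ref{thm: mixed norm}, which only requires the local estimate hypotheses in a single variable; the hypotheses in $x_2$ and $x_3$ appearing in the corollary are superfluous for this particular estimate and are presumably included for symmetric application elsewhere. Next, since $\psi$ is supported in a rectangle of dimensions $b_1\times b_2\times b_3$, values of $f_1$ and $f_2$ outside the $x_3$-interval of length $b_3$ on which $\psi$ lives do not contribute to $\Lambda_3$. Replacing $f_1,f_2,f_3$ by their restrictions to the projection of this rectangle can only decrease their $L^p$ norms while leaving $\Lambda_3$ unchanged, so without loss of generality $f_1(x_2,\cdot)$ and $f_2(x_1,\cdot)$ are each supported in an interval of length $b_3$ in the $x_3$ variable.

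Now I would apply H\"older's inequality in $x_3$: since $3/8-1/4=1/8$, for each fixed $x_2$ (respectively $x_1$),
\begin{equation*}
\|f_1(x_2,\cdot)\|_{L^{8/3}_{x_3}}\le b_3^{1/8}\|f_1(x_2,\cdot)\|_{L^4_{x_3}},
\end{equation*}
and analogously for $f_2$. Taking the $L^4$ norm in the remaining outer variable yields $\|f_1\|_{L^4_{x_2}L^{8/3}_{x_3}}\le b_3^{1/8}\|f_1\|_4$ and $\|f_2\|_{L^4_{x_1}L^{8/3}_{x_3}}\le b_3^{1/8}\|f_2\|_4$; multiplying these two factors produces the claimed $b_3^{1/4}$ prefactor. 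There is no real obstacle in this deduction. The only subtlety is keeping track of which outer variable accompanies the inner $x_3$ for each $f_j$ in Theorem \ref{thm: mixed norm} (namely $x_2$ for $f_1$ and $x_1$ for $f_2$, as dictated by the $TT^*$ structure in its proof), so that the $b_3^{1/8}$ factor can be pulled out cleanly.
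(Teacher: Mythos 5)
Your proof is correct and matches the paper's approach: the authors state that Corollary \ref{cor:off diag d=3} follows from Theorem \ref{thm: mixed norm} by ``a simple application of H\"older's inequality,'' and your argument supplies exactly that, using the $b_3$-width support in $x_3$ to pass from $L^{8/3}_{x_3}$ to $L^4_{x_3}$ at cost $b_3^{1/8}$ for each of $f_1,f_2$. Your side observation that the local estimate hypotheses in $x_2,x_3$ are not actually used here (only $x_1$ is needed for Theorem \ref{thm: mixed norm}) is also accurate.
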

	
	The above will be useful in establishing off-diagonal estimates in higher dimensions. While it is proven through a simple application of H\"older's inequality (which typically sacrifices sharpness), the result is actually sharp in its decay in $\lambda$. (Take $b_j\equiv1$, for instance.) An example demonstrating sharpness will be provided in Section \ref{sec: higher d}.
	
	\begin{corollary}\label{cor:on diag d=3}
		Suppose that $\Lambda_3$ satisfies the local estimate hypotheses.
		
		Then, 
		\begin{equation*}
			|\Lambda_3(f_1,f_2,f_3)|\le (\lambda\mu)^{-1/4}\|f_1\|_{8/3}\|f_2\|_{8/3}\|f_3\|_{8/3}.
		\end{equation*}
	\end{corollary}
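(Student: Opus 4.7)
The plan is to derive the on-diagonal $L^{8/3}$ estimate by running Theorem \ref{thm: mixed norm} three times---once for each choice of which $f_j$ is placed in the $L^2$ slot---and then interpolating trilinearly with equal weights.

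First, I would observe that the assumption that $\Lambda_3$ satisfies the local estimate hypotheses should be read symmetrically in all three coordinate directions. The proof of Theorem \ref{thm: mixed norm} (namely $TT^*$-duality in one variable, followed by operator van der Corput in a second) then goes through verbatim under any permutation of $(x_1,x_2,x_3)$ playing the roles of the dualized direction and the van der Corput direction. Tracking how the mixed-norm indices arise from the Hardy--Littlewood--Sobolev step, this produces three companion estimates:
\begin{align*}
\text{(A)}\quad |\Lambda_3(f_1,f_2,f_3)| &\lesssim (\lambda\mu)^{-1/4}\,\|f_1\|_{L^4_{x_2}L^{8/3}_{x_3}}\|f_2\|_{L^4_{x_1}L^{8/3}_{x_3}}\|f_3\|_{L^2},\\
\text{(B)}\quad |\Lambda_3(f_1,f_2,f_3)| &\lesssim (\lambda\mu)^{-1/4}\,\|f_1\|_{L^2}\|f_2\|_{L^4_{x_3}L^{8/3}_{x_1}}\|f_3\|_{L^4_{x_2}L^{8/3}_{x_1}},\\
\text{(C)}\quad |\Lambda_3(f_1,f_2,f_3)| &\lesssim (\lambda\mu)^{-1/4}\,\|f_1\|_{L^4_{x_3}L^{8/3}_{x_2}}\|f_2\|_{L^2}\|f_3\|_{L^4_{x_1}L^{8/3}_{x_2}}.
\end{align*}

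Next I would apply trilinear complex interpolation with equal weights $\theta_A=\theta_B=\theta_C=\tfrac{1}{3}$, using the standard identification of complex interpolation spaces among mixed-norm Lebesgue spaces (see, for example, \cite{BeLo76}*{Chapter 4}). For each fixed $f_j$, the three target spaces appearing in (A), (B), (C) have reciprocal exponents $\tfrac{1}{4}$, $\tfrac{1}{2}$, and $\tfrac{3}{8}$ along each of its two surviving coordinate axes (in some order determined by the cyclic structure), so the interpolated reciprocal exponent is
\begin{equation*}
\tfrac{1}{3}\!\left(\tfrac{1}{4}+\tfrac{1}{2}+\tfrac{3}{8}\right)=\tfrac{3}{8}
\end{equation*}
on every axis. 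The resulting space is thus $L^{8/3}_y L^{8/3}_z = L^{8/3}(\R^2)$. Since the decay factor $(\lambda\mu)^{-1/4}$ is common to all three bounds, it is preserved under interpolation.

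The only real issue is bookkeeping: one must check that each of the three bounds really does follow from Theorem \ref{thm: mixed norm} by the symmetric permutation argument, and verify the interpolation arithmetic along every coordinate axis of each $f_j$. No additional geometric input about $S$ or $\psi$ is required, and no constant blow-up occurs because the decay rate $(\lambda\mu)^{-1/4}$ agrees across the three inputs to the interpolation.
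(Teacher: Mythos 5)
Your overall strategy is the same as the paper's: permute the roles of $f_1,f_2,f_3$ in Theorem \ref{thm: mixed norm} to obtain three mixed-norm estimates and then interpolate with weights $(1/3,1/3,1/3)$. (The paper packages the final step as a bilinear interpolation for the operator $T$ obtained by duality, citing \cite{BeLo76}*{Theorem 5.1.2}, while you propose trilinear interpolation on the form directly; this is a stylistic rather than substantive difference.) Your exponent arithmetic is also correct.

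However, there is a genuine gap that you have glossed over as ``bookkeeping.'' The three raw estimates (A), (B), (C) do \emph{not} put the mixed norms of a given $f_j$ on a common scale, because the order of iterated integration is inconsistent. For example, for $f_1$ (a function of $(x_2,x_3)$), estimate (A) uses $L^4_{x_2}L^{8/3}_{x_3}$ ($x_2$ outer, $x_3$ inner) while (C) uses $L^4_{x_3}L^{8/3}_{x_2}$ ($x_3$ outer, $x_2$ inner). These are genuinely different spaces, and the complex interpolation space between $L^{p_0}_{u}L^{q_0}_{v}$ and $L^{p_1}_{v}L^{q_1}_{u}$ is not given by the naive per-axis exponent average; the identity $[L^{p_0}_uL^{q_0}_v,L^{p_1}_uL^{q_1}_v]_\theta = L^{p_\theta}_uL^{q_\theta}_v$ requires the same ordering of the variables in both endpoints. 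So the interpolation theorem you invoke does not apply to (A), (B), (C) as written.

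The fix is precisely what the paper does with the phrase ``by permutation of indices \emph{and Minkowski's inequality}'': since in each mixed norm the outer exponent $4$ dominates the inner exponent $8/3$, Minkowski's integral inequality lets you reverse the order at the cost of an upper bound, e.g.\ $\|f_1\|_{L^4_{x_3}L^{8/3}_{x_2}} \le \|f_1\|_{L^{8/3}_{x_2}L^4_{x_3}}$. Applying this where needed puts all three estimates on the same mixed-norm lattice for each $f_j$ (with a fixed choice of inner/outer variable), after which your exponent averages correctly yield $L^{8/3}_{x_a}L^{8/3}_{x_b}=L^{8/3}(\R^2)$. Without that Minkowski step, the interpolation claim is unjustified.
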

	
	Corollary \ref{cor:on diag d=3} is proven through a simple application of interpolation in mixed norm spaces, though we provide full details here.
	
	\begin{proof}
		
		By permutation of indices and Minkowski's inequality, 
		\begin{equation*}
			|\Lambda_3(f_1,f_2,f_3)|\lesssim	(\lambda\mu)^{-1/4}
			\begin{cases}
				\|f_1\|_2\|f_2\|_{L^{8/3}_{x_1}L^4_{x_3}}\|f_3\|_{L^{8/3}_{x_1}L^4_{x_2}}\\
				\|f_2\|_2\|f_3\|_{L^4_{x_1}L^{8/3}_{x_2}}\|f_1\|_{L^{8/3}_{x_2}L^4_{x_3}}\\
				\|f_3\|_2\|f_1\|_{L^4_{x_2}L^{8/3}_{x_3}}\|f_2\|_{L^4_{x_1}L^{8/3}_{x_3}}.
			\end{cases}
		\end{equation*}
		Duality for mixed norm spaces guarantees $L^{p'}L^{q'}$ is dual to $L^pL^q$, where $p',q'$ are the dual exponents to $p,q$, respectively. Therefore the bilinear operator $T$ defined by
		\begin{equation*}
			T(f_2,f_3)(x_1)=\iint e^{i\lambda S(x_1,x_2,x_3)}f_2(x_1,x_3)f_3(x_1,x_2)\psi(x_1,x_2,x_3)dx_2dx_3
		\end{equation*}
		is a bounded map from
		\begin{align*}
			L^4L^{\frac{8}{3}}\times L^4L^{\frac{8}{3}}&\to L^{2}L^{2}\\
			L^{\frac{8}{3}}L^4\times L^2L^{2}&\to L^{\frac{8}{5}}L^{\frac{4}{3}},\text{ and }\\
			L^2L^2\times L^{\frac{8}{3}}L^4&\to L^{\frac{4}{3}}L^{\frac{8}{5}},
		\end{align*}
		with norm $\lesssim(\lambda\mu)^{-1/4}.$ Applying interpolation for bilinear operators on mixed norm spaces, we conclude that $T:L^{8/3}\times L^{8/3}\to L^{8/5},$ and therefore
		\begin{multline*}
			|\Lambda_3(f_1,f_2,f_3)|\lesssim (\lambda\mu)^{-1/4}\|f_1\|_{8/3}\|f_2\|_{8/3}\|f_3\|_{8/3} \\
			=(\lambda\mu)^{-1/4}\|f_1\|_{8/3}\|f_2\|_{8/3}\|f_3\|_{8/3}.
		\end{multline*}
		The details for this interpolation theorem can be found in \cite{BeLo76}*{Theorem 5.1.2}.
	\end{proof}
	
	\section{Local Estimates in Higher Dimensions}\label{sec: higher d}
	
	\subsection{The Off-Diagonal Case}
	
	\begin{theorem}\label{thm: off diagonal}
		Suppose $\Lambda_d$ satisfies the local estimate hypothesis in $x_1$ with parameter $\mu$ and that furthermore, $\psi$ is supported on a box of dimensions $b_1\times\dots\times b_d$. Then
		\begin{equation*}
			|\Lambda_d(f_1,\dots,f_n)|\leq C(\lambda\mu)^{-2^{1-d}}\prod_{j=1}^db_j^{1/p_j-2^{1-d}}\|f_j\|_{L^{p_j}}, 
		\end{equation*}
		where $(p_1,\dots,p_d)=(2^{d-1},2^{d-1},2^{d-2},\dots,4,2)$ and $C$ depends solely on $d$ and the $C_1,C_2$ given in the local estimate hypotheses.
	\end{theorem}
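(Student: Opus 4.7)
The approach is induction on $d$, with base case $d=3$ supplied by Corollary \ref{cor:off diag d=3}. For the inductive step, I apply a $TT^*$-type argument in the $x_d$ variable, which is natural because $p_d=2$ is the smallest exponent, following the template of the proof of Theorem \ref{thm: mixed norm}: viewing $\Lambda_d(f_1,\dots,f_d)$ as the pairing $\langle T(f_1,\dots,f_{d-1}),\bar f_d\rangle$, Cauchy-Schwarz gives $|\Lambda_d|^2\le\|f_d\|_2^2\,\|T(f_1,\dots,f_{d-1})\|_2^2$. Expanding $\|T\|_2^2$ produces a $(d-1)$-dimensional oscillatory Loomis-Whitney integral in the variables $\hat{x}_d$, integrated against $dx_d\,dy$, with phase $S_{x_d,y}(\hat{x}_d):=S(\hat{x}_d,x_d)-S(\hat{x}_d,y)$, cutoff $\psi_{x_d,y}(\hat{x}_d):=\psi(\hat{x}_d,x_d)\bar\psi(\hat{x}_d,y)$, and input functions $F_{j,x_d,y}:=f_j(\,\cdot\,,x_d)\bar f_j(\,\cdot\,,y)$ for $j\le d-1$.

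For each fixed $(x_d,y)$, the identity $D_{d-1}S_{x_d,y}(\hat{x}_d)=\int_y^{x_d}D_dS(\hat{x}_d,t)\,dt$ together with the mean value theorem and the local estimate hypothesis on $S$ shows that $S_{x_d,y}$ satisfies the local estimate hypothesis in $x_1$ with parameter $\mu|x_d-y|$, uniformly in $(x_d,y)$; the cutoff $\psi_{x_d,y}$ satisfies the required support and smoothness bounds by the product rule. Invoking the inductive hypothesis in dimension $d-1$ with exponents $(p'_1,\dots,p'_{d-1})=(2^{d-2},2^{d-2},2^{d-3},\dots,2)$ bounds the inner integral by
\begin{equation*}
    C(\lambda\mu|x_d-y|)^{-2^{2-d}}\,\prod_{j=1}^{d-1}b_j^{1/p'_j-2^{2-d}}\,\|F_{j,x_d,y}\|_{L^{p'_j}}.
\end{equation*}
Since $2p'_j=p_j$ for $j\le d-1$, Cauchy-Schwarz in the variables of $F_{j,x_d,y}$ yields $\|F_{j,x_d,y}\|_{L^{p'_j}}\le\|f_j(\,\cdot\,,x_d)\|_{L^{p_j}}\|f_j(\,\cdot\,,y)\|_{L^{p_j}}$, converting the $F$-norms into the target $L^{p_j}$ norms of $f_j$.

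It then remains to bound $\iint|x_d-y|^{-2^{2-d}}H(x_d)H(y)\,dx_d\,dy$, where $H(t):=\prod_{j<d}\|f_j(\,\cdot\,,t)\|_{L^{p_j}}$ is supported in an interval of length $\sim b_d$. Since $2^{2-d}<1$, Cauchy-Schwarz together with Young's convolution inequality $L^1*L^2\to L^2$ control this by $\lesssim b_d^{1-2^{2-d}}\|H\|_{L^2}^2$, and the generalized H\"older inequality in $t$ combined with Fubini yields $\|H\|_{L^2}\le\prod_{j<d}\|f_j\|_{L^{p_j}}$, using the arithmetic identity $\sum_{j<d}1/p_j=1/2$ (equivalently, $P=1$ for the off-diagonal exponents). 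Taking square roots and combining with the $\|f_d\|_2$ factor from the initial $TT^*$ step produces the desired estimate, with the $b_d$ exponent $(1-2^{2-d})/2=1/2-2^{1-d}=1/p_d-2^{1-d}$ matching the theorem statement. The main obstacle is the exponent bookkeeping: the induction closes only because the doubling $2p'_j=p_j$, the sum $\sum_{j<d}1/p_j=1/2$, and the matching $(1-2^{2-d})/2=1/p_d-2^{1-d}$ hold simultaneously; a secondary technical point is verifying that $S_{x_d,y}$ inherits the hypothesis in $x_1$ (and not in some other coordinate) with the correct parameter $\mu|x_d-y|$, which is immediate from the MVT.
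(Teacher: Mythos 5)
Your proof is correct and follows the same skeleton as the paper's: base case $d=3$ from Corollary \ref{cor:off diag d=3}, induction via $TT^*$ applied to the function with the $L^2$ exponent, and the mean-value-theorem argument to show that the difference phase $S_{x_d,y}$ inherits the local estimate hypotheses with parameter $\mu|x_d-y|$. The genuine divergence is in how you handle the singular kernel $|x_d-y|^{-2^{2-d}}$ after invoking the inductive hypothesis. The paper first applies H\"older in $(x_d,y)$ with the weights $1/p'_j$ (summing to $1$), pairing the full kernel with the $L^2$ factor; it then applies Hardy-Littlewood-Sobolev, which produces a mixed norm $\|f_{d-1}\|_{L^4 L^r}$, and finally converts to a plain $L^4$ norm using the box support. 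You instead bound each $\|F_{j,x_d,y}\|_{L^{p'_j}}$ by the tensor-product estimate $\|f_j(\cdot,x_d)\|_{L^{p_j}}\|f_j(\cdot,y)\|_{L^{p_j}}$ via Cauchy--Schwarz, collect into $H(x_d)H(y)$, and control the singular double integral via Cauchy--Schwarz plus Young $L^1*L^2\to L^2$ (with the kernel's local $L^1$ norm $\sim b_d^{1-2^{2-d}}$), then conclude with generalized H\"older and Fubini in $x_d$ using $\sum_{j<d}1/p_j=1/2$. This bypasses HLS and mixed norms entirely and yields exactly the same exponents, so it is a genuine and arguably cleaner route for the off-diagonal case. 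It is worth noting that the paper's HLS/mixed-norm step is not idle machinery: the same mechanism, with the mixed norm retained rather than collapsed, is exactly what drives the on-diagonal case (Theorem \ref{thm: on diagonal}) via interpolation, whereas your Cauchy--Schwarz step discards the mixed-norm information and would not adapt to that setting.
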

	
	The following example shows Theorem \ref{thm: off diagonal} is sharp. Let $S(x)=\mu x_1\cdots x_d$ and $f_j=\one_{[0,|\lambda\mu|^{-1/2}]}(x_{d-1})\one_{[0,|\lambda\mu|^{-1/2}]}(x_{d})$ when $1\leq j\leq d-2$, while $f_{d-1}=\one_{[0,|\lambda\mu|^{-1/2}]}(x_{d})$ and $f_d=\one_{[0,|\lambda\mu|^{-1/2}]}(x_{d-1})$. Then, an elementary scaling argument shows $|\Lambda_d|\approx|\lambda\mu|^{-1}$ and $\prod_j\|f_j\|_{p_j}\approx|\lambda\mu|^{-2^{1-d}-1}$.
	\begin{proof}
		
		The proof is by induction on $d$, where the $d=3$ case is established by Theorem \ref{thm: mixed norm}. 
		
		Suppose that Theorem \ref{thm: off diagonal} holds in dimension $d$. We wish to show it holds in dimension $d+1$. Here, we write $x'=(x_1,\dots,x_d)$, so $x=(x',x_{d+1})$.
		
		Applying the $TT^*$ method to $\Lambda_{d+1}$ on the function $f_{d+1}$, we obtain
		\begin{multline}\label{eq:tt* off diag}
			|\Lambda_{d+1}(f_1,\dots,f_{d+1})|^2\leq\|f_{d+1}\|_2^2\iiint e^{i\lambda(S(x',x_{d+1})-S(x',y))}\\
			\times\prod_{j=1}^d f_j(\pi_j(x',x_{d+1}))\overline{f_j}(\pi_j(x',y))\psi(x',x_{d+1})\overline{\psi}(x',y)dx'dx_{d+1}dy.
		\end{multline}

		Define $\hat{x}'_j:=(x_1,\dots,x_{j-1},x_{j+1},\dots,x_d)$. Let
		\begin{align*}
			S_{x_{d+1},y}(x')&=S(x',x_{d+1})-S(x',y),\\ 
			F_{j,x_{d+1},y}(\hat{x}_j')&=f_j(\pi_j(x',x_{d+1}))\overline{f_j}(\pi_j(x',y))\text{ for }1\leq j\leq d,\text{ and }\\
			\psi_{x_{d+1},y}(x)&=\psi(x',x_{d+1})\overline{\psi}(x',y),
		\end{align*}
		to rewrite the right hand side of \eqref{eq:tt* off diag} as 
		\begin{equation}\label{eq: post sub II}
			\iiint e^{i\lambda S_{x_{d+1},y}(x')}\prod_{j=1}^d F_{j,x_{d+1},y}(\hat{x}_j')\psi_{x_{d+1},y}(x')dx'dx_{d+1}dy.
		\end{equation}
		
		At this point, we would like to apply the induction hypothesis. As in the case of applying Lemma \ref{lemma: Phong-Stein} in the proof of the $d=3$ estimates, it is trivial that $\psi_{x_{d+1},y}$ is supported in a rectangle of width $b_1$ in the $x_1$-direction, and the smoothness of $\psi$ guarantees uniformity of the constant $C_2$.
		
		As before, we see that
		\begin{equation*}
			D_dS_{x_{d+1},y}(x)=\int_y^{x_{d+1}} D_{d+1}S(x,t)dt,
		\end{equation*}
		so by the Mean Value Theorem, $|D_dS_{x_{d+1},y}(x')|\geq \mu|x_{d+1}-y|$ uniformly in $x'$. Bounds for the $x_1$-derivatives of $S_{x_{d+1},y}$ follow similarly. Thus, the local estimate hypotheses hold with parameter $\mu|x_{d+1}-y|$.
		
		We now apply the induction hypothesis in $x$ to control \eqref{eq: post sub II} by
		\begin{equation*}
			(\lambda\mu)^{-2^{1-d}}\iint |x_{d+1}-y|^{-2^{1-d}}\prod_{j=1}^db_j^{1/p_j-2^{1-d}}\|F_{j,x_{d+1},y}\|_{L^{p_j}_{\hat{x}_j}}dx_{d+1}dy
		\end{equation*}
		where $(p_1,\dots,p_d)=(2^{d-1},2^{d-1},2^{d-2},\dots,4,2)$. Note that $\sum_{j=1}^dp_j^{-1}=1$. By H\"older's inequality, this is bounded by
		\begin{multline*}
			(\lambda\mu)^{-2^{1-d}}\prod_{j=1}^db_j^{1/p_j-2^{1-d}}\left(\iint|x_{d+1}-y|^{-2^{2-d}}\|F_{d,x_{d+1},y}\|^2_{L^{2}_{\hat{x}_d}}dx_{d+1}dy\right)^{1/2}\\
			\times\prod_{j=1}^{d-1}\left(\iint\|F_{j,x_{d+1},y}\|_{L^{p_j}_{\hat{x}_j}}^{p_j}dx_{d+1}dy\right)^{1/p_j}.
		\end{multline*}
		
		Using the same steps as in the end of the proof of Theorem \ref{thm: mixed norm}, we bound the first factor by $\|f_d\|^2_{L^4_{x_1,\dots,x_{d-1}}L^{2/(1-2^{1-d})}_{x_{d+1}}}$, which is controlled by $b_{d+1}^{(1-2^{2-d})/2}\|f_d\|_{4}^2,$ since $1-2^{1-d}=\frac{1}{2}+\frac{1-2^{2-d}}{2},$ and then both terms are square rooted.
		
		The remaining factors are bounded as follows:
		\begin{multline*}
			\left(\iint\|F_{j,x_{d+1},y}\|_{L^{p_j}}^{p_j}dx_{d+1}dy\right)^{1/p_j}=\\
			\left(\iiint f_j^{p_j}(\hat{x}'_j,x_{d+1})\overline{f_j}^{p_j}(\hat{x'}_j,y)d\hat{x}'_jdx_{d+1}dy\right)^{1/p_j}\\
			=\|f_j\|^2_{L^{2p_j}_{\hat{x}'_j}L^{p_j}_{x_{d+1}}},
		\end{multline*}
		which is controlled by $b_{d+1}^{1/p_{j}}\|f_j\|^2_{L^{2p_j}}$.
		
		We note that cumulatively, we obtain the exponent of $b_{d+1}$ above as $(1/2-2^{1-d})+1/p_2+\cdots +1/p_d=1-2^{1-d}=2(1/2-2^{1-(d+1)}).$ The exponent of $b_j$ is simply $1/p_j.$ Theorem \ref{thm: off diagonal} follows from taking square roots. (All of the exponents from the dimension $d$ case double, and the exponent 2 comes from the use of $TT^*$.)
		
	\end{proof}

	\subsection{The On-Diagonal Case}
	
	\begin{theorem}\label{thm: on diagonal}
		Suppose $\Lambda_d$ satisfies the local estimate hypotheses in $x_1,\dots,x_d$ with parameter $\mu$. 
		
		Then,
		\begin{equation*}
			|\Lambda_d(f_1,\dots,f_d)|\leq C(\lambda\mu)^{-2^{1-d}}\prod_{j=1}^d\|f_j\|_{L^{p(d)}},
		\end{equation*}
		where $p(d):=(d-1)\frac{2^{d-1}}{2^{d-1}-1}$ and and $C$ depends solely on $d$ and the $C_1,C_2$ given in the local estimate hypotheses.
	\end{theorem}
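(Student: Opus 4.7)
The plan is to prove Theorem \ref{thm: on diagonal} by induction on $d$, with the base case $d=3$ given by Corollary \ref{cor:on diag d=3}. The inductive step extends the approach used to deduce that corollary from Theorem \ref{thm: mixed norm}: first establish a mixed-norm estimate via $TT^*$ and the inductive hypothesis, then interpolate symmetrically over the $d$ ways of dualizing a variable to collapse the mixed norms onto $L^{p(d)}$.

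Applying $TT^*$ on $f_d$ rewrites $|\Lambda_d|^2$ as $\|f_d\|_2^2$ times a doubled integral over $(x_d,y)$ containing the $(d-1)$-linear form $\Lambda_{d-1}(F_{1,x_d,y},\ldots,F_{d-1,x_d,y})$ with modified phase $S_{x_d,y}(x')=S(x',x_d)-S(x',y)$. As in the proofs of Theorems \ref{thm: mixed norm} and \ref{thm: off diagonal}, the Mean Value Theorem guarantees that $S_{x_d,y}$ satisfies the local estimate hypotheses in each of $x_1,\ldots,x_{d-1}$ with parameter $\mu|x_d-y|$. Applying the inductive hypothesis to the inner form, followed by Cauchy-Schwarz and the Hardy-Littlewood-Sobolev inequality on the kernel $|x_d-y|^{-2^{2-d}}$, produces
\begin{equation*}
|\Lambda_d(f_1,\ldots,f_d)|\lesssim(\lambda\mu)^{-2^{1-d}}\|f_d\|_2\prod_{j=1}^{d-1}\|f_j\|_{L^{p(d)}_{x_d}L^{2p(d-1)}_{\hat{x}'_j}},
\end{equation*}
in which the outer exponent $p(d)$ emerges via the identity $(d-1)/(1-2^{1-d})=(d-1)2^{d-1}/(2^{d-1}-1)=p(d)$.

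Since the local estimate hypothesis holds in each $x_k$, the same argument runs with any $f_k$ in place of $f_d$, yielding $d$ such mixed-norm estimates. After aligning the mixed-norm layer orderings via Minkowski's inequality (exploiting $p(d)<2p(d-1)$), apply multilinear interpolation in mixed-norm spaces (as in \cite{BeLo76}*{Theorem 5.1.2} and the proof of Corollary \ref{cor:on diag d=3}) with equal weights $1/d$ on the $d$ estimates. For each variable $x_l$ of each $f_j$, the three exponents $2$, $p(d)$, $2p(d-1)$ that appear across the estimates interpolate to $q_l$ with $1/q_l=(1/d)(1/2)+(1/d)(1/p(d))+((d-2)/d)(1/(2p(d-1)))$, and the algebraic identity $2(d-1)/p(d)=1+(d-2)/p(d-1)$, which one verifies directly from the definition of $p(d)$, gives $q_l=p(d)$ and hence the desired on-diagonal bound. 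The main obstacle is the bookkeeping of the interpolation: the $d$ estimates have different mixed-norm layer orderings (the outer $L^{p(d)}$ lives on a different coordinate in each), so Minkowski's inequality must be applied carefully to bring them into a compatible layer structure before the interpolation theorem applies, exactly as in the proof of Corollary \ref{cor:on diag d=3}.
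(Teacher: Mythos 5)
Your proof is correct, and it achieves the theorem by the same overall strategy as the paper: induction on $d$ via $TT^*$ with base case Corollary \ref{cor:on diag d=3}, Hardy-Littlewood-Sobolev on the convolution kernel, and multilinear interpolation over the $d$ permutations of which factor is dualized. The genuine difference lies in the intermediate Hölder step. After applying the inductive hypothesis, you split the kernel $|x_d-y|^{-2^{2-d}}$ symmetrically among all $d-1$ remaining factors (Hölder with exponents all equal to $d-1$), so every $f_j$ with $j<d$ lands in the same mixed norm $L^{p(d)}_{x_d}L^{2p(d-1)}_{\hat{x}'_j}$, and only $d$ estimates remain to interpolate. (What you call Cauchy--Schwarz in that step is really Hölder with $d-1$ equal exponents together with Cauchy--Schwarz to split $\|f_j(\cdot,x_d)\overline{f_j}(\cdot,y)\|_{L^{p(d-1)}}$.) The paper instead applies Hölder asymmetrically, placing the entire kernel on a single factor; this introduces a third exponent $r$ (from HLS) alongside $2p(d-1)$ and $q$, and forces interpolation of $d(d-1)$ estimates. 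Your symmetric version buys a cleaner intermediate bound and a shorter interpolation computation, with the closed-form identity $2(d-1)/p(d) = 1 + (d-2)/p(d-1)$ replacing the paper's more involved convex-combination arithmetic; the arithmetic checks out and the HLS exponent $s=(1-2^{1-d})^{-1}$ is legitimate for all $d\geq 3$ since $0<2^{2-d}<1$. The Minkowski step to align layer orderings before interpolating works as you describe since $p(d)<2p(d-1)$, and mirrors what the paper does in Corollary \ref{cor:on diag d=3}.
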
	
	
	This estimate is also sharp, as may be seen by taking $S(x)=\mu x_1\cdots x_d$ and $f_j=\one_{B(0,|\lambda\mu|^{-1/d})}$, where $B(0,r)$ denotes the ball of radius $r$ centered at the origin in $\R^{d-1}$.

	\begin{proof}[Proof of Theorem \ref{thm: on diagonal}]
		We pick up at equation \eqref{eq:tt* off diag} in the middle of the proof of Theorem \ref{thm: off diagonal}, where we replace the previous induction hypothesis with Theorem \ref{thm: on diagonal} in the dimension $d$ case. (We will repeat these skipped steps for every permutation of indices, which is allowed because the local estimate hypotheses hold in each coordinate direction.)
		
		Applying this induction hypothesis to \eqref{eq:tt* off diag} in $x$ with $p=p(d)$, we obtain a bound of 
		\begin{equation*}
			(\lambda\mu)^{-2^{1-d}}\iint |x_{d+1}-y|^{2^{1-d}}\prod_{j=1}^d\|F_{j,x_{d+1},y}\|_{L^{p(d)}_{\hat{x}'_j}}dx_{d+1}dy.
		\end{equation*}

		From here on, write $f_j=f_j(w,x_{d+1})$, where $w\in\R^{d-1}$. Applying H\"older's inequality, we get a bound of $\lambda^{-2^{1-d}}$ times
		\begin{equation*}
			\left(\iint|x_{d+1}-y|^{-p2^{1-d}}\|F_{1,x_{d+1},y}\|_{L^p_w}^pdx_{d+1}dy\right)^{1/p}\prod_{j=2}^d\|F_{j,x_{d+1},y}\|_{L^q_{x_{d+1},y}L^p_w},
		\end{equation*}
		where $q=\frac{2^{d-1}(d-1)^2}{2^{d-1}(d-2)+1}$ is defined via the equation $\frac{1}{p}+(d-1)\frac{1}{q}=1$.
		
		For the first integral factor, we use the same Hardy-Littlewood-Sobolev estimate in the $x_{d+1}$ and $y$ variables to get a bound of
		\begin{equation*}
			\|f_1\|^2_{L_w^{2p}L_{x_{d+1}}^r},
		\end{equation*}
		where $r=\frac{2p}{1-p2^{1-d}/2}=\frac{2p}{1-p2^{-d}}=\frac{2^d(d-1)}{2^d-d-1}$.
		
		For the second factor, we observe $p<q$ and apply Minkowski's inequality to get
		\begin{align*}
			\|F_{j,x_{d+1},y}\|_{L^q_{x_{d+1},y}L^p_w}&\leq\|F_{j,x_{d+1},y}\|_{L^p_wL^q_{x_{d+1},y}}\\
			&=\|f_j(w,x_{d+1})\overline{f_j}(w,y)\|_{L^p_wL^q_{x_{d+1},y}}\\
			&\leq\Big{\|}\ \|f_j\|_{L^q_{x_{d+1}}}^2\Big{\|}_{L^p_w}\\
			&=\|f_j\|_{L^{2p}_wL^q_{x_{d+1}}}^2.
		\end{align*}
		Putting it all together, we get an estimate of
		\begin{equation}\label{eq: to interpolate}
			|\Lambda_{d+1}(f_1,\dots,f_{d+1})|\leq \lambda^{-2^{-d}}\|f_{d+1}\|_{2}\|f_1\|_{L_w^{2p}L_{x_{d+1}}^r}\prod_{j=2}^d\|f_j\|_{L^{2p}_wL^q_{x_{d+1}}}.
		\end{equation}
		
		One may reach the conclusion of Theorem \ref{thm: on diagonal} by repeating the above argument for all permutations of $1,\dots,d+1$ and interpolating, applying Minkowski's inequality when necessary. (Observe that all of the mixed norms above are eligible for use of Minkowski since the norm with smaller exponent is taken first.)
		
		We now provide the details which show how $p(d+1)$ comes out of the interpolations.
		
		The norm bounds in \eqref{eq: to interpolate} are determined by the choices of indices for the $L^2$ bound and for the $L^{2p}L^r$ bound, then the rest are fixed. Thus, there are $d(d+1)$ total bounds to interpolate. We interpolate with all of them to obtain the same exponent for each $f_j$.
		
		For any particular $f_j$, we get $d$ many exponents 2, an $L^{2p}L^r$ bound $d$ times (with each variable $x_1,\dots,x_{j-1},x_{j+1},\dots,x_{d+1}$ falling under the $L^r$ norm once), and an $L^{2p}L^q$ bound $d(d-1)$ times (with the same variables falling under the $L^q$ norm $d-1$ times). Here $p$ refers to the exponent $p(d)=\frac{2^{d-1}(d-1)}{2^{d-1}-1}$, which comes from induction.
		
		Note that $2p(d)=\frac{2^d(d-1)}{2^{d-1}-1}$. Thus, the calculation for $1/p(d+1)$ is
		\begin{align*}
			\frac{1}{p(d+1)}&=\frac{1}{d+1}\frac{1}{2}+\frac{1}{d+1}\left(\frac{d-1}{d}\frac{1}{2p(d)}+\frac{1}{d}\frac{1}{r}\right)+\frac{d-1}{d+1}\left(\frac{d-1}{d}\frac{1}{2p(d)}+\frac{1}{d}\frac{1}{q}\right)\\
			&=\frac{1}{2(d+1)}+\frac{d-1}{d+1}\frac{1}{2p(d)}+\frac{1}{d(d+1)}\frac{1}{r}+\frac{d-1}{d(d+1)}\frac{1}{q}\\
			\begin{split}
				&=\frac{1}{2(d+1)}+\frac{d-1}{d+1}\frac{1}{2p(d)}+\frac{1}{d(d+1)}\frac{2^d-d-1}{2^d(d-1)}\\
				&\hspace{6cm}+\frac{d-1}{d(d+1)}\frac{2^{d-1}(d-2)+1}{2^{d-1}(d-1)^2}
			\end{split}\\
			&=\frac{2^{d-1}d(d-1)+(2^{d-1}-1)d(d-1)+2^d-d-1+2^d(d-2)+2}{2^dd(d+1)(d-1)}\\
			&=\frac{2^dd^2-d^2-2^d+1}{2^dd(d+1)(d-1)}=\frac{2^d-1}{2^dd},
		\end{align*}
		which is indeed given by our formula for $p(d+1)$ in Theorem \ref{thm: on diagonal}.
	\end{proof}

	\section{Linear Optimization and Proof of Main Theorem}\label{sec: main proof}
	
	Let $\eta:\R\to [0,1]$ be supported in $[1,4].$ We can construct a dyadic partition of unity of $(0,1)^d$ by using functions supported in $[\varep_1, 4\varep_1]\times\cdots\times [\varep_d,4\varep_d]$ of the form $\eta_\varep(x)=\prod_{j=1}^d\eta(\varep_j^{-1}x_j)$, where $\varep_i=2^{-j_i}$ and $\vec{j}:=(j_1,\dots,j_d)$ ranges over $\N^d$. (From here on, we use $[-1,1]^d$ as the \lq\lq small enough\rq\rq neighborhood of the origin for simplicity.) This is a standard construction; see for example \cite{musc13}*{ch. 8}. Now let $\psi_\varep=\psi\eta_\varep.$ Let $\psi$ be supported close enough to the origin so that the hypotheses of Lemma \ref{lemma: lb} are satisfied, then whenever $x$ is in the support of $\psi_\varep,$
	\begin{equation*}
		|\partial_{k}^n \psi_\varep(x)|\lesssim \varep_k^{-n},
	\end{equation*}
	where the implicit constant only depends on derivatives of $\psi$ and $\eta.$ We will see that by the estimates in Section \ref{GE}, Theorems \ref{thm: off diagonal} and \ref{thm: on diagonal} hold with cutoff function $\psi_\varep$ with $\mu=\varep^{\alpha-\one}$ for all $\alpha\in\cn(S).$ Splitting up $\psi$ into $\sum \psi_\varep$,  by the triangle inequality, we consider the estimates of each $\Lambda_d$ with cutoff $\psi_\varep$ only in the orthant $[0,1]^d$. The strategy will be to sum these estimates to obtain the conclusion of Theorem \ref{thm: main theorem}. We now break up this summing method into a few separate results. We begin with the following extensions of the above theorems on $\varep-$boxes with cutoff $\psi_\varep.$ In particular, we can just think of each $f_j$ as being supported on the box $B(\hat{\varep}_j):=[\varep_1,4\varep_1]\times\cdots\times[\varep_{j-1},4\varep_{j-1}]\times[\varep_{j+1},4\varep_{j+1}]\times\cdots\times[\varep_d,4\varep_d]$.
	
	\begin{corollary}[Extending Off-Diagonal Exponents]\label{cor: extending off}
		Let $\psi$ and $S$ be as in Theorem \ref{thm: off diagonal}, and let $f_j$ be supported on the box $B(\hat{\varep}_j)$ for $1\le j\le d$. Let $p_1\ge 2^{d-1}, p_2\ge 2^{d-1}, p_3\ge 2^{d-2},\dots, p_{d}\ge 2.$ Then, there exists $C>0$ independent of $\varep$ such that for all $\alpha\in\cn(S)$,
		\begin{equation*}
			|\Lambda_d(f_1,\dots, f_d)| \lesssim (\lambda\varep^{\alpha})^{-2^{1-d}} \prod_{j=1}^{d} \varep_j^{1-P+p_j^{-1}}\|f_j\|_{p_j}.
		\end{equation*}
	\end{corollary}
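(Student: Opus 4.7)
The plan is to apply Theorem \ref{thm: off diagonal} at the boundary exponents $\tilde{p} := (2^{d-1},2^{d-1},2^{d-2},\ldots,4,2)$, and then upgrade to the hypothesized range $p_j\geq\tilde{p}_j$ via H\"older's inequality, exploiting the compact support of each $f_j$.

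First I would verify that $\Lambda_d$ with cutoff $\psi_\varep$ satisfies the local estimate hypotheses in $x_1$ with $b_j\asymp\varep_j$ and $\mu:=C\max_{\alpha'\in\cn(S)}\varep^{\alpha'-\bone}$. The derivative bound $|\partial_{x_1}^n\psi_\varep|\lesssim\varep_1^{-n}$ is immediate from the dyadic construction. For the phase, Lemma \ref{lemma: lb} gives $|D_dS|\gtrsim\varep^{\alpha'-\bone}$ for every $\alpha'\in\cn(S)$, and in particular $|D_dS|\gtrsim\mu$, while Corollary \ref{cor:up bound} yields $|\partial_{x_1}^n D_dS|\lesssim\max_{\alpha'}\varep^{\alpha'-\bone-ne_1}=\mu\varep_1^{-n}$, matching the required $C_2\mu b_1^{-n}$ for $n=1,2$.

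Applying Theorem \ref{thm: off diagonal} then gives
\[
|\Lambda_d(f_1,\ldots,f_d)|\lesssim (\lambda\mu)^{-2^{1-d}}\prod_{j=1}^d \varep_j^{1/\tilde{p}_j - 2^{1-d}}\|f_j\|_{\tilde{p}_j}.
\]
Since $f_j$ is supported on $B(\hat{\varep}_j)$, of $(d-1)$-dimensional volume $\lesssim\prod_{k\neq j}\varep_k$, and $p_j\geq\tilde{p}_j$, H\"older gives $\|f_j\|_{\tilde{p}_j}\lesssim\bigl(\prod_{k\neq j}\varep_k\bigr)^{1/\tilde{p}_j - 1/p_j}\|f_j\|_{p_j}$. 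Collecting powers of each $\varep_k$ and using $\sum_j 1/\tilde{p}_j = 1$, the exponent $1/\tilde{p}_k - 2^{1-d} + \sum_{j\neq k}(1/\tilde{p}_j - 1/p_j)$ simplifies to $1 - 2^{1-d} - P + 1/p_k$.

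To finish, since $\mu\geq\varep^{\alpha-\bone}$ for every $\alpha\in\cn(S)$,
\[
(\lambda\mu)^{-2^{1-d}}\leq (\lambda\varep^{\alpha-\bone})^{-2^{1-d}} = (\lambda\varep^\alpha)^{-2^{1-d}}\prod_k\varep_k^{2^{1-d}},
\]
and the extra factor $\prod_k\varep_k^{2^{1-d}}$ exactly cancels the $-2^{1-d}$ in each exponent, leaving $\prod_k\varep_k^{1-P+1/p_k}$, as claimed. I expect the delicate point to be the bookkeeping in the H\"older step: it is the identity $\sum_j 1/\tilde{p}_j = 1$ at the boundary exponents that causes the final exponent of $\varep_k$ to depend only on $P$ and $p_k$ rather than on the individual $\tilde{p}_j$, which is precisely what one needs for the dyadic summation performed in Section \ref{sec: main proof}.
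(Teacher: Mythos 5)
Your proof is correct and follows essentially the same route as the paper: apply Theorem \ref{thm: off diagonal} at the boundary exponents $(2^{d-1},2^{d-1},2^{d-2},\dots,2)$ and then upgrade to $p_j\geq\tilde{p}_j$ via H\"older on the compact support $B(\hat{\varep}_j)$, with the final simplification $1-2^{1-d}-P+p_k^{-1}+2^{1-d}=1-P+p_k^{-1}$ after absorbing $\prod_k\varep_k^{2^{1-d}}$ from $(\lambda\varep^{\alpha-\bone})^{-2^{1-d}}$. One small difference worth noting: you set $\mu:=C\max_{\alpha'\in\cn(S)}\varep^{\alpha'-\bone}$ and only later weaken $(\lambda\mu)^{-2^{1-d}}\le(\lambda\varep^{\alpha-\bone})^{-2^{1-d}}$ for the desired $\alpha$; this is the cleaner order of operations, since the upper bounds $|\partial_{x_1}^nD_dS|\lesssim\mu\varep_1^{-n}$ supplied by Corollary \ref{cor:up bound} are only guaranteed against the \emph{maximal} such $\varep^{\alpha'-\bone}$, whereas the paper applies the theorem directly with $\mu=\varep^{\alpha-\bone}$ for an arbitrary $\alpha$ and implicitly relies on the same monotonicity in $\mu$. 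Your write-up makes this point explicit, which is a minor improvement in rigor over the paper's.
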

	\begin{proof}
		Since $f_j$ is supported on $B(\hat{\varep}_j)\subset \R^{d-1},$ then for $p\ge q,$
		\begin{equation*}
			\|f_j\|_{q}\le (\varep_1\cdots\varep_{j-1}\varep_{j+1}\cdots \varep_d)^{q^{-1}-p^{-1}} \|f_j\|_p.
		\end{equation*}
		Let $(q_1,\dots,q_d)=(2^{d-1},2^{d-1},2^{d-2},\dots,2)$, and note $Q:=\sum_{j=1}^dq_j^{-1}=1$. By Lemma \ref{lemma: lb}, $|D_dS|\gtrsim\varep^{\alpha-\bone}$. Thus, by Theorem \ref{thm: off diagonal}, 
		\begin{align*}
			|\Lambda_d(f_1,\dots,f_d)|&\lesssim(\lambda\varep^{\alpha-\bone})^{-2^{1-d}}\prod_{j=1}^d\varep_j^{q_j^{-1}-2^{1-d}}\|f_j\|_{q_j}\\
			&=(\lambda\varep^\alpha)^{-2^{1-d}}\prod_{j=1}^d\varep_j^{q_j^{-1}}\|f_j\|_{q_j}\\
			&\lesssim(\lambda\varep^\alpha)^{-2^{1-d}}\prod_{j=1}^d(\varep_1\cdots\varep_{j-1}\varep_{j+1}\cdots \varep_d)^{q_j^{-1}-p_j^{-1}}\varep_j^{q_j^{-1}}\|f_j\|_{p_j}\\
			&=(\lambda\varep^{\alpha})^{-2^{1-d}} \prod_{j=1}^{d} \varep_j^{1-P+p_j^{-1}}\|f_j\|_{p_j}.
		\end{align*}
		
		Note that in our application of Theorem \ref{thm: off diagonal}, we obtain a constant dependent solely on $d$, and the $C_1,C_2$ given in the local estimate hypotheses. Corollary \ref{cor:up bound} applied to $D_dS$ in place of $S$ guarantees that $C_2$ is independent of $\varep$. The uniformity of $C_1$ for each choice of $\varep$ follows from the smoothness of $\psi$ and the scaling properties of the $\eta_\varep$ used in defining $\psi_\varep$.
	\end{proof}
	
	Let $q_j$ be as in the proof of Corollary \ref{cor: extending off}. By ignoring oscillation and simply applying H\"older's inequality 
	\begin{align*}
		|\Lambda_d(f_1,\dots,f_d)|&\le \left|\int \prod_{j=1}^d f_j(\hat{x}_j)\bone_{[\varep_j,4\varep_j]}(x_j)dx\right|\\
		&= \prod_{j=1}^{d} \varep_j^{q_j^{-1}}\|f_j\|_{q_j}\\
		&\lesssim \prod_{j=1}^{d} \varep_j^{q_j^{-1}}(\varep_1\cdots\varep_{j-1}\varep_{j+1}\cdots \varep_d)^{q_j^{-1}-p_j^{-1}}\|f_j\|_{p_j}\\
		&= \prod_{j=1}^d \varep_j^{1-P+p_j^{-1}} \|f_j\|_{p_j}.
	\end{align*}
	
	\begin{corollary}[Extending On-Diagonal Exponents]\label{cor: extending on}
		With the same setup as Theorem \ref{thm: on diagonal}, let $f_j$ be supported on the box $B(\hat{\varep}_j)$ for $1\le j\le d$.  Let $p_j\ge p(d)= (d-1)\frac{2^{d-1}}{2^{d-1}-1}.$ Then, there exists $C>0$ independent of $\varep$ such that for all $\alpha\in\cn(S)$,
		\begin{equation*}
			|\Lambda_d(f_1,\dots, f_d)| \lesssim (\lambda\varep^{\alpha})^{-2^{1-d}} \prod_{j=1}^{d} \varep_j^{1-P+p_j^{-1}}\|f_j\|_{p_j}.
		\end{equation*}
	\end{corollary}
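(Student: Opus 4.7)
The plan is to follow the same template as the proof of Corollary \ref{cor: extending off}, but substituting Theorem \ref{thm: on diagonal} for Theorem \ref{thm: off diagonal} and using the on-diagonal base exponent $p(d)$ in place of the tuple $(2^{d-1},2^{d-1},2^{d-2},\dots,2)$. Concretely, I would first invoke Lemma \ref{lemma: lb} to convert the hypothesis into $|D_dS|\gtrsim \varep^{\alpha-\bone}$ on the support of $\psi_\varep$, which (together with Corollary \ref{cor:up bound} for the higher derivatives of $D_dS$ and the scaling of the bump functions $\eta_\varep$) shows that $\Lambda_d$ with cutoff $\psi_\varep$ satisfies the local estimate hypotheses in every coordinate direction with parameter $\mu=\varep^{\alpha-\bone}$ and constants $C_1,C_2$ independent of $\varep$.

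Applying Theorem \ref{thm: on diagonal} with the exponent $p(d)$ and this value of $\mu$ gives
\begin{equation*}
    |\Lambda_d(f_1,\dots,f_d)|\lesssim (\lambda\varep^{\alpha-\bone})^{-2^{1-d}}\prod_{j=1}^d\|f_j\|_{p(d)}.
\end{equation*}
Since each $f_j$ is supported on the box $B(\hat{\varep}_j)$ of $(d-1)$-dimensional volume $\prod_{k\neq j}\varep_k$, and since $p_j\ge p(d)$, Hölder's inequality yields
\begin{equation*}
    \|f_j\|_{p(d)}\le \Big(\prod_{k\neq j}\varep_k\Big)^{1/p(d)-1/p_j}\|f_j\|_{p_j}.
\end{equation*}
Combining these two inequalities produces a product of $\varep_k$'s; it remains only to verify that the cumulative exponents collapse into the claimed form $\varep_j^{1-P+p_j^{-1}}$ (after absorbing the $\varep_j^{2^{1-d}}$ factors coming from $\varep^{-(\alpha-\bone)2^{1-d}}$ into a single $(\lambda\varep^{\alpha})^{-2^{1-d}}$).

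The only real work is this bookkeeping. Tracking the exponent of a fixed $\varep_j$ across all factors, one gets
\begin{equation*}
    2^{1-d}+\sum_{i\neq j}\bigl(1/p(d)-1/p_i\bigr)-2^{1-d}\alpha_j = 2^{1-d}+(d-1)/p(d) - (P - 1/p_j) - 2^{1-d}\alpha_j,
\end{equation*}
and the identity $(d-1)/p(d)=1-2^{1-d}$ (which is immediate from the definition $p(d)=(d-1)2^{d-1}/(2^{d-1}-1)$) causes the first two terms to sum to $1$. Thus the exponent of $\varep_j$ becomes exactly $1-P+p_j^{-1}-2^{1-d}\alpha_j$, which matches the claimed form. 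I expect the main obstacle to be nothing more than organizing this exponent arithmetic cleanly; the analytic content is entirely contained in Theorem \ref{thm: on diagonal}, Lemma \ref{lemma: lb}, and Corollary \ref{cor:up bound}, which together handle uniformity of the constants in $\varep$.
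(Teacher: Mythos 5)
Your proposal is correct and is essentially the argument the paper has in mind: the paper's proof of this corollary is literally ``identical to that of Corollary~\ref{cor: extending off}, except that it relies on the estimates of Theorem~\ref{thm: on diagonal} in place of Theorem~\ref{thm: off diagonal},'' and your write-up simply makes explicit the Hölder step and the exponent bookkeeping (including the key identity $(d-1)/p(d)=1-2^{1-d}$) that the paper leaves implicit by referring back to the off-diagonal case.
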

	\begin{proof}
		The proof is identical to that of Corollary \ref{cor: extending off}, except that it relies on the estimates of Theorem \ref{thm: on diagonal} in place of Theorem \ref{thm: off diagonal}.
	\end{proof}
	
	We have another local estimate coming from Loomis-Whitney inequality for any $p_j\ge d-1$:
	\begin{multline*}
		|\Lambda_d(f_1,\dots,f_d)| \le \prod_{j=1}^{d}\|f_j\|_{d-1}\le \prod_{j=1}^{d}(\varep_1\cdots\varep_{j-1}\varep_{j+1}\cdots \varep_d)^{(d-1)^{-1}-p_j^{-1}}\|f_j\|_{p_j}\\
		= \prod_{j=1}^d \varep_j^{1-P+p_j^{-1}} \|f_j\|_{p_j}.
	\end{multline*}
	Briefly, we just established the following whenever $f_j$ are supported in $B(\hat{\varep}_j)$, for $p_j\ge p(d)$ or $p_1\geq 2^{d-1},p_2\ge 2^{d-1},p_3\ge 2^{d-2},\dots,p_d\geq2$:
	\begin{multline}\label{eq: higher size estimate}
		\left|\int_{\R^d}e^{i\lambda S(x)}\prod_{j=1}^df_j(\hat{x}_j)\psi_\varep(x)dx\right|\\
		\lesssim \min_{\alpha\in\cn(S)}\left\lbrace \prod_{j=1}^d \varep_j^{1-P+p_j^{-1}}, \lambda^{-2^{1-d}}\prod_{j=1}^d \varep_j^{-2^{1-d}\alpha_j+1-P+p_j^{-1}}\right\rbrace\prod_{j=1}^d \|f_j\|_{p_j}.
	\end{multline}
	
	Given $(p_1,\dots,p_d),$ the most important element of $\cn(S)$ we need to consider is $\alpha=\delta(1-P+p^{-1},\dots, 1-P+p_d^{-1}).$ If $k$ is the largest codimension over any face containing $\alpha,$ then $\alpha$ can be written as a convex combination $\sum_{i=1}^{d-k+1}\theta_i\alpha^i$ where $\alpha^i\in\partial\cn(S)$ are linearly independent, and the interplay between $\alpha, \delta, 2^{d-1},$ and the $\alpha^i$ all play a role in the final estimate of $\Lambda_d.$ We begin with the exact sum that needs bounding, move on to an optimization lemma, then have Corollary \ref{cor: linop} describe the interplay stated in the previous sentence.
	
	Summing over all $\varep-$boxes, we will need to estimate the sum over equation \eqref{eq: higher size estimate}:
	\begin{equation*}
		\sum_{j_1=1}^\infty\cdots\sum_{j_d=1}^\infty \min_{\alpha\in\cn(S)}\left\lbrace \prod_{i=1}^d \varep_{i}^{1-P+p_i^{-1}}, \lambda^{-2^{1-d}}\prod_{i=1}^d \varep_{i}^{-2^{1-d}\alpha_i+1-P+p_i^{-1}}\right\rbrace.
	\end{equation*}
	Starting with $j_k=1$ provides an upper bound since our $\varep$ needs to be small enough. If any $j_k\ge \log(\lambda)/(\delta(1-P+p_k^{-1})):=c_k\log(\lambda),$ then this part of the sum is bounded above by
	\begin{equation*}
		\sum_{j_1=1}^\infty\cdots\sum_{j_k=c_k\log(\lambda)}^\infty\cdots\sum_{j_d=1}^\infty \prod_{i=1}^d \varep_{i}^{1-P+p_i^{-1}}\le \sum_{j_k=c_k\log(\lambda)}^\infty 2^{-j_k(1-P+p_k^{-1})}
	\end{equation*}
	since each $1-P+p_k^{-1}>0.$ This is a geometric series bounded by $\lambda^{-1/\delta}.$ The remaining sum where all $j_i\le c_i\log(\lambda)$ is handled with the results below.
	
	\begin{lemma}\label{lem: lin op general}
		Let $\alpha^1,\dots,\alpha^k\in\R^d$ be such that $\alpha^1,\dots,\alpha^k,\bee_{k+1},\dots,\bee_d$ are linearly independent. Let $\alpha^{k+1}$ be such that 
		\begin{equation*}
			\sum_{i=1}^{k+1}\theta_i\alpha^i=\bz
		\end{equation*}
		for $\theta_i>0$ summing to 1. Let $M_1,\dots, M_{k+1}$ be strictly positive constants. Then
		\begin{equation*}
			\sum_{j_1\in\Z}\cdots\sum_{j_k\in\Z}\sum_{j_{k+1}=a_{k+1}}^{b_{k+1}}\cdots \sum_{j_d=a_d}^{b_d}\min_{1\le n\le k+1}\{M_n 2^{\alpha^n\cdot \vec{j}}\}\lesssim \prod_{i=k+1}^d(b_i-a_i)\prod_{i=1}^{k+1} M_i^{\theta_i},
		\end{equation*}
		where the implicit constant is independent of all $a_n, b_n,$ and $M_n.$
	\end{lemma}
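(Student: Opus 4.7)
The plan is to peel off the bounded outer sums over $j_{k+1},\ldots,j_d$ so that they contribute the factor $\prod_{i=k+1}^d(b_i-a_i)$, then reduce the resulting sum over $(j_1,\ldots,j_k)\in\Z^k$ to a lattice sum in $\R^k$ via linear change of variables, and finally bound that lattice sum by $\prod_{i=1}^{k+1}M_i^{\theta_i}$ using uniform exponential decay away from a single ``center of mass'' point.

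First I would split each $\alpha^n=(\tilde\alpha^n,\tilde{\tilde\alpha}^n)$ into its first $k$ and last $d-k$ coordinates and write $\vec j=(\vec j^{(1)},\vec j^{(2)})$ accordingly. Freezing $\vec j^{(2)}$ and setting $N_n:=M_n 2^{\tilde{\tilde\alpha}^n\cdot\vec j^{(2)}}$, the inner sum becomes $\sum_{\vec j^{(1)}\in\Z^k}\min_n\{N_n 2^{\tilde\alpha^n\cdot\vec j^{(1)}}\}$. Projecting $\sum_n\theta_n\alpha^n=\vec 0$ coordinate-wise yields both $\sum_n\theta_n\tilde\alpha^n=\vec 0$ in $\R^k$ and $\sum_n\theta_n\tilde{\tilde\alpha}^n=\vec 0$ in $\R^{d-k}$; the latter implies $\prod_n N_n^{\theta_n}=\prod_n M_n^{\theta_n}$, so I only need a $\vec j^{(2)}$-uniform bound by $C\prod_n N_n^{\theta_n}$. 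The linear independence hypothesis is equivalent to $\tilde\alpha^1,\ldots,\tilde\alpha^k$ being linearly independent in $\R^k$, so $\vec j^{(1)}\mapsto\vec y:=(\tilde\alpha^1\cdot\vec j^{(1)},\ldots,\tilde\alpha^k\cdot\vec j^{(1)})$ is a linear bijection sending $\Z^k$ to a full-rank lattice $L\subset\R^k$, and the constraint forces $\tilde\alpha^{k+1}\cdot\vec j^{(1)}=-\theta_{k+1}^{-1}\sum_{i=1}^k\theta_i y_i$.

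The weighted AM-GM inequality, together with $\sum_n\theta_n\tilde\alpha^n=\vec 0$, then yields the uniform upper bound $\min_n\{N_n 2^{\tilde\alpha^n\cdot\vec j^{(1)}}\}\le\prod_n N_n^{\theta_n}=:M_\ast$, with equality at a unique point $\vec y_\ast\in\R^k$ where all $k+1$ terms coincide; setting $\vec z:=\vec y-\vec y_\ast$ converts this into the exact identity $\min_n\{N_n 2^{\tilde\alpha^n\cdot\vec j^{(1)}}\}=M_\ast\,2^{\min_n\tilde\alpha^n\cdot\vec z}$. The main obstacle is then to extract uniform exponential decay from this identity: for every unit vector $\vec v\in\R^k$, $\min_n\tilde\alpha^n\cdot\vec v<0$ (otherwise $0=\sum_n\theta_n\tilde\alpha^n\cdot\vec v>0$), and continuity/compactness on the unit sphere supplies a constant $c_0>0$ with $\min_n\tilde\alpha^n\cdot\vec v\le-c_0$ for every unit $\vec v$. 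This yields $\min_n\{N_n 2^{\tilde\alpha^n\cdot\vec j^{(1)}}\}\le M_\ast\cdot 2^{-c_0|\vec y-\vec y_\ast|}$, whose sum over $L$ is bounded uniformly in $\vec y_\ast$ by comparison with the finite integral $\int_{\R^k}2^{-c_0|\vec z|}\,d\vec z$. Multiplying the resulting bound $C M_\ast=C\prod_n M_n^{\theta_n}$ by the cardinality $\prod_{i=k+1}^d(b_i-a_i)$ of the outer range then completes the proof.
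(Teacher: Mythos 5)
Your argument is correct and follows essentially the same route as the paper: after peeling off the bounded outer sums (yielding the factor $\prod_{i=k+1}^d(b_i-a_i)$), a linear change of variables normalizes the first $k$ directions, the minimum is shown to peak at a unique balance point with value $\prod_n M_n^{\theta_n}$, and the convex-combination hypothesis together with compactness on the unit sphere supplies the uniform exponential decay that makes the remaining (lattice) sum converge --- the paper does this by passing to an integral at the outset and integrating out the last $d-k$ variables after the change of variables $x = A^T y$, while you keep the sum discrete and compare to an integral at the end, a cosmetic difference. One small notational slip: in the displayed identity $\min_n\{N_n 2^{\tilde\alpha^n\cdot\vec j^{(1)}}\}=M_\ast\,2^{\min_n\tilde\alpha^n\cdot\vec z}$ with $\vec z=\vec y-\vec y_\ast$, the exponents on the right should read $z_1,\dots,z_k$ and $-\theta_{k+1}^{-1}\sum_{i\le k}\theta_i z_i$ (since $\vec z$ lives in the $\vec y$ coordinate system, not the $\vec j^{(1)}$ system), but as the change of variables is a fixed linear bijection this does not affect the compactness argument or the conclusion.
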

	\begin{proof}
		The above quantity is summable since $\bz$ is in the convex hull of the $\alpha^n.$ The first step in the proof of this lemma is bounding the sum above by a uniform constant times
		\begin{equation*}
			\int_{-\infty}^\infty\cdots\int_{-\infty}^\infty\int_{a_{k+1}}^{b_{k+1}}\cdots\int_{a_d}^{b_d}\min_{1\le n\le k+1}\{M_n e^{\alpha^n\cdot x}\}dx_d\cdots dx_1.
		\end{equation*}
		(We actually obtain factors of $\log2$ in the exponents, though we ignore these factors as they are simply absorbed into the underlying, arbitrary constant.)
		
		Let $A$ be the invertible matrix satisfying $A\alpha^n=\bee_n$ for $1\le n\le k$ and $A\bee_n=\bee_n$ for $k+1\le n\le d.$ Substituting $x=A^Ty,$ the integral is bounded by a constant only depending on $A$, multiplied by
		\begin{equation*}
			\int_{-\infty}^\infty\cdots\int_{-\infty}^\infty\int_{a_{k+1}}^{b_{k+1}}\cdots\int_{a_d}^{b_d}\min_{1\le n\le k}\{M_n e^{y_n},M_{k+1}e^{A\alpha^{k+1}\cdot y}\}dy_d\cdots dy_1.
		\end{equation*}
		Since $A\alpha^{k+1}$ only depends on $y_1,\dots, y_k$, the integrand is independent of the remaining variables. Integrating first over the inner $d-k$ variables, the above equals to $\prod_{i=k+1}^d(b_i-a_i)$ times
		\begin{equation}\label{eq: min int}
			\int_{\R^k}\min_{1\le n\le k}\{M_n e^{y_n},M_{k+1}e^{A\alpha^{k+1}\cdot y}\}dy_k\cdots dy_1.
		\end{equation}
		
		By definition of $A$ and $\alpha^{k+1},$
		\begin{equation}\label{eq: A dot y}
			A\alpha^{k+1}\cdot y = A\left(-\sum_{i=1}^k\frac{\theta_i}{\theta_{k+1}}\alpha^i \right)\cdot y = -\sum_{i=1}^k \frac{\theta_i}{\theta_{k+1}} y_i.
		\end{equation}
		Therefore, if $y^0\in\R^k$ satisfies 
		\begin{equation*}
			e^{y_n^0}= M_n^{-1}\prod_{i=1}^{k+1} M_i^{\theta_i},
		\end{equation*} 
		then by \eqref{eq: A dot y}, $y^0$ also satisfies
		\begin{multline*}
			e^{A\alpha^{k+1}\cdot y^0} = \left(\prod_{\ell=1}^k e^{-\theta_\ell y_\ell^0}\right)^{1/\theta_{k+1}}=\left(\prod_{\ell=1}^k \left(M_\ell^{-1}\prod_{i=1}^{k+1} M_i^{\theta_i}\right)^{-\theta_\ell}\right)^{1/\theta_{k+1}}\\
			= \left(\prod_{\ell=1}^k M_\ell^{\theta_\ell}\right)^{1/\theta_{k+1}}\left(\prod_{i=1}^{k+1} M_i^{\theta_i}\right)^{1-1/\theta_{k+1}}= M_{k+1}^{-1}\prod_{i=1}^{k+1}M_i^{\theta_i}.
		\end{multline*}
		
		Shifting variables by the translation $y\to y+y^0$, the integral \eqref{eq: min int} equals
		\begin{equation*}
			\prod_{i=1}^{k+1}M_i^{\theta_i}\int_{\R^k}\min_{1\le n\le k}\{e^{y_n}, e^{A^T\alpha^{k+1}\cdot y}\}dy_k\cdots dy_1.
		\end{equation*}
		Now $\R^k$ can be written as the union $\{y_1\le 0\}\cup\cdots\cup\{y_k\le 0\}\cup \{\theta_1y_1+\cdots+\theta_ky_k\ge 0\},$ since all $\theta_n>0.$
		The constraint on $\theta_1,\dots,\theta_{k+1}$ guarantees
		\begin{equation*}
			\sup_{\|y\|_2=1}\min_{1\le n\le k}\{y_n, -\sum_{i=1}^k\theta_iy_i/\theta_{k+1}\}<-c<0
		\end{equation*}
		for some positive constant $c$ depending only on the coefficients $\theta_n.$ By \eqref{eq: A dot y} and homogeneity, we establish
		\begin{equation*}
			\min_{1\le n\le k}\{y_n, A\alpha^{k+1}\cdot y\}<-c\|y\|_2.
		\end{equation*} 
		So the minimum inside the integral can be bounded by $e^{-c\|y\|_2}.$ Since the integral of this Gaussian is a uniform constant depending only on $c$ and $k$, we conclude that our initial sum is bounded above by exactly what was required.
	\end{proof}
	In Corollary \ref{cor: linop} below, $\delta$ should be thought of as a Newton distance and $\gamma=2^{d-1}.$
	
	\begin{corollary}\label{cor: linop}
		Let $\alpha^1,\dots, \alpha^k$ be linearly independent vectors in $\N^d$, where $d\ge k.$ Let $\gamma,\delta>0$ and assume $\alpha=\sum_{i=1}^k\theta_i\alpha^i$ for $0<\theta_i<1$ summing to 1. Then
		\begin{multline*}
			\sum_{j_1=0}^{c_1\log(\lambda)}\cdots\sum_{j_d=0}^{c_d\log(\lambda)} \min_{1\le i\le k}\{2^{-\vec{j}\cdot\alpha/\delta},\lambda^{-1/\gamma}2^{\vec{j}\cdot (\alpha^i/\gamma-\alpha/\delta)} \} \\
			\lesssim\begin{cases}
				\lambda^{-1/\delta} \log^{d-k}(\lambda) \text{ if } \delta >\gamma,\\
				\lambda^{-1/\delta} \log^{d}(\lambda) \text{ if } \delta =  \gamma,\\
				\lambda^{-1/\gamma} \text{ if } \delta <\gamma,
			\end{cases}
		\end{multline*}
		where the implicit constant is independent of $\lambda.$
	\end{corollary}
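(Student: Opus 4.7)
My strategy is to rewrite the summand as $\min_{0\le n\le k}\{M_n 2^{\beta^n\cdot \vec j}\}$, where $\beta^0 := -\alpha/\delta$, $\beta^i := \alpha^i/\gamma - \alpha/\delta$ for $1 \le i \le k$, $M_0 := 1$, and $M_i := \lambda^{-1/\gamma}$ for $i \ge 1$, and then to apply Lemma \ref{lem: lin op general}. The key preliminary calculation is to find nonnegative weights $\theta'_0, \ldots, \theta'_k$ summing to $1$ with $\sum_n \theta'_n \beta^n = 0$: using $\alpha = \sum_i \theta_i \alpha^i$ together with linear independence of the $\alpha^i$, one obtains $\theta'_i = (\gamma/\delta)\theta_i$ for $i \ge 1$ and $\theta'_0 = (\delta - \gamma)/\delta$, with weighted product $\prod_n M_n^{\theta'_n} = \lambda^{-1/\delta}$. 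Nonnegativity of the $\theta'_n$ forces $\delta \ge \gamma$, which dictates the case split.

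\textbf{Case $\delta > \gamma$.} All $\theta'_n$ are strictly positive. The vectors $\beta^1,\ldots,\beta^k$ are linearly independent: expressed in the basis $\alpha^1,\ldots,\alpha^k$ of their span, they correspond to the matrix $(1/\gamma)I - (1/\delta)\mathbf{1}\theta^T$, which by the matrix determinant lemma has determinant $(\delta-\gamma)/(\gamma^k \delta) \neq 0$. Since $\beta^1,\ldots,\beta^k$ are $k$ linearly independent vectors in $\R^d$, a permutation of coordinates produces a basis of the form $\beta^1,\ldots,\beta^k,\bee_{k+1},\ldots,\bee_d$. Majorizing the box sum by extending $j_1,\ldots,j_k$ to $\Z$ (permissible since the summand is nonnegative) and invoking Lemma \ref{lem: lin op general} with the weights computed above then yields exactly the bound $(\log\lambda)^{d-k}\lambda^{-1/\delta}$.

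\textbf{Cases $\delta = \gamma$ and $\delta < \gamma$.} When $\delta = \gamma$ the weight $\theta'_0$ vanishes, $\beta^1,\ldots,\beta^k$ become linearly dependent, and Lemma \ref{lem: lin op general} no longer applies to the full collection; instead I bound the min crudely by the weighted geometric mean $\prod_{i=1}^k (M_i 2^{\beta^i \cdot \vec j})^{\theta_i}$, which equals $\lambda^{-1/\gamma}$ because $\sum_i \theta_i \beta^i = \alpha/\gamma - \alpha/\delta = 0$ precisely on this boundary. Summing trivially over the whole box of side lengths $c_i\log\lambda$ then produces $(\log\lambda)^d \lambda^{-1/\delta}$. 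When $\delta < \gamma$ the same geometric mean gives $\lambda^{-1/\gamma} 2^{\vec j \cdot \alpha (1/\gamma - 1/\delta)}$; since $\alpha$ has strictly positive coordinates in the intended application (where $\alpha = \delta(1-P+p_j^{-1})_j$) and $1/\gamma - 1/\delta < 0$, the sum over $\vec j \in \N^d$ is a convergent geometric series and yields $\lambda^{-1/\gamma}$.

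The main obstacle will be verifying the precise linear independence and coordinate-permutation hypotheses needed to invoke Lemma \ref{lem: lin op general} in the generic case $\delta > \gamma$; the degenerate boundary $\delta = \gamma$ is also delicate and is exactly where the extra $(\log\lambda)^k$ appears, matching the off-by-a-log subtlety already noted after Theorem \ref{thm: main theorem}.
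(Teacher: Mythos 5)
Your proof is correct and follows essentially the same route as the paper: apply Lemma \ref{lem: lin op general} in the case $\delta>\gamma$ with the exponent vectors $\alpha^i/\gamma-\alpha/\delta$ and $-\alpha/\delta$ and the same weights $\theta_i\gamma/\delta$, $1-\gamma/\delta$, then handle $\delta=\gamma$ and $\delta<\gamma$ by replacing the min with the $\log$-convex combination $\prod_i(\lambda^{-1/\gamma}2^{\vec j\cdot(\alpha^i/\gamma-\alpha/\delta)})^{\theta_i}$ and summing directly. Your explicit matrix-determinant-lemma check that the shifted vectors $\beta^1,\dots,\beta^k$ remain linearly independent precisely when $\delta\neq\gamma$ is a small refinement the paper leaves implicit behind its ``without loss of generality'' remark, and it confirms that Lemma \ref{lem: lin op general} genuinely fails to apply at $\delta=\gamma$, justifying the case split.
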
 
	
	\begin{proof} There are three cases to consider.
		
		\textbf{Case 1:} Here, we will apply Lemma \ref{lem: lin op general} in the case $\alpha=\delta(1-P+p_1^{-1},\dots, 1-P+p_d^{-1})\in\partial\cn(\phi)$ lies on a codimension $d-k+1$ face, i.e., when there are linearly independent $\alpha^1,\dots, \alpha^k$ such that $\alpha$ lives in the convex hull of $\{\alpha^1,\dots, \alpha^k\}.$ Since $\alpha^1,\dots, \alpha^k$ are linearly independent, without loss of generality we assume $\alpha^1,\dots, \alpha^k, \bee_{k+1},\dots, \bee_d$ are linearly independent in order to apply the above results. 
		
		The inequality $\delta>\gamma$ implies $0<1-\gamma/\delta<1$, so we write $\bz$ as the convex combination
		\begin{equation*}
			\bz= \left(1-\frac{\gamma}{\delta}\right)\left(-\frac{\alpha}{\delta}\right)+\sum_{i=1}^k\frac{\theta_i\gamma}{\delta}\left(\frac{\alpha^i}{\gamma}-\frac{\alpha}{\delta}\right)
		\end{equation*}
		and notice that all coefficients are nonzero. In this case we apply Lemma \ref{lem: lin op general} with $M_n=\lambda^{-1/\gamma}$ for $1\le n\le k$ and $M_{k+1}=1$ to get 
		\begin{equation*}
			\prod_{i=1}^{k+1}M_i^{\theta_i}= \prod_{i=1}^{k}\lambda^{(-1/\gamma)(\theta_i\gamma/\delta)}=\lambda^{-1/\delta}.
		\end{equation*}
		Letting $a_n,b_n$ be as in Lemma \ref{lem: lin op general}, $b_n-a_n=c_n\log(\lambda)$ for $k+1\le n\le d,$ so our final estimate is
		\begin{equation*}
			\sum_{j_1=0}^{c_1\log(\lambda)}\cdots\sum_{j_d=0}^{c_d\log(\lambda)} \min_{1\le n\le k}\{2^{-\vec{j}\cdot\alpha/\delta},\lambda^{-1/\gamma}2^{\vec{j}\cdot (\alpha^n/\gamma-\alpha/\delta)} \}\lesssim \lambda^{-1/\delta}\log^{d-k}(\lambda).
		\end{equation*}
		
		\textbf{Case 2:} If $\delta =\gamma,$ consider
		\begin{equation*}
			\lambda^{-1/\gamma}2^{\vec{j}\cdot (\alpha^n/\gamma-\alpha/\delta)}=\lambda^{-1/\gamma}2^{\vec{j}\cdot (\alpha^n/\gamma-\alpha/\gamma)}:=M_n(\vec{j}).
		\end{equation*} 
		Minimizing only over these $k$ many terms is less than or equal to minimizing over the $\log$-convex combination 
		\begin{equation*}
			\prod_{i=1}^k M_i(\vec{j})^{\theta_i}=\lambda^{-1/\gamma}=\lambda^{-1/\delta}.
		\end{equation*}
		In this case, summing over each variable $j_1,\dots, j_d$ produces an additional log term, granting the upper bound $\lambda^{-1/\delta}\log^d(\lambda).$
		
		\textbf{Case 3:} If $\delta < \gamma,$ then let $M_n(\vec{j})=\lambda^{-1/\gamma}2^{\vec{j}\cdot (\alpha^n/\gamma-\alpha/\delta)}$ as in Case 2. Then
		\begin{equation*}
			\prod_{i=1}^k M_i(\vec{j})^{\theta_i}=\lambda^{-1/\gamma}2^{\vec{j}\cdot(\alpha/\gamma-\alpha/\delta)}=\lambda^{-1/\gamma} \prod_{i=1}^{d} r_i^{-j_i}
		\end{equation*}
		for some positive reals $r_n$, since $1/\gamma < 1/\delta,$ and since all components of $\alpha$ are strictly positive by nondegeneracy of $S.$
		
		Summing over all $j_n$ proves the claim for Case 3: 
		\begin{multline*}
			\sum_{j_1=0}^{c_1\log(\lambda)}\cdots\sum_{j_d=0}^{c_d\log(\lambda)} \min_{1\le n\le k}\{2^{-\vec{j}\cdot\alpha/\delta},\lambda^{-1/\gamma}2^{\vec{j}\cdot (\alpha^n/\gamma-\alpha/\delta)} \}\\
			\lesssim   \sum_{j_1=0}^{\infty}\cdots\sum_{j_d=0}^{\infty} \lambda^{-1/\gamma} \prod_{i=1}^{d} r_i^{-j_i}
			\lesssim \lambda^{-1/\gamma}.
		\end{multline*}
	\end{proof}
	
	\begin{proof}[Proof of Theorem \ref{thm: main theorem}]
		Without loss of generality, it suffices to estimate $\Lambda_d$ only on the orthant $[0,1]^d$. Applying the previously given partition of unity, \eqref{eq: higher size estimate} gives
		\begin{multline*}
			\frac{|\Lambda_d(f_1,\dots,f_d)|}{\prod_{j=1}^d\|f_j\|_{p_j}}\\
			\lesssim\sum_{j_1=1}^\infty\cdots\sum_{j_d=1}^\infty \min_{\alpha\in\cn(S)}\left\lbrace \prod_{k=1}^d \varep_{k}^{1-P+p_k^{-1}}, \lambda^{-2^{1-d}}\prod_{k=1}^d \varep_{k}^{-2^{1-d}\alpha_k+1-P+p_k^{-1}}\right\rbrace.
		\end{multline*}
		
		By Corollary \ref{cor: linop} (taking $\gamma=2^{d-1}$ and $\delta$ as in the statement of Theorem \ref{thm: main theorem}) and our prior reasoning restricting the sum to $j_i\leq c_i\log(\lambda)$, we obtain
		\begin{equation*}
			\frac{|\Lambda_d(f_1,\dots,f_d)|}{\prod_{j=1}^d\|f_j\|_{p_j}}\lesssim
			\begin{cases}
				\lambda^{-1/\delta} \log^{d-k}(\lambda) \text{ if } \delta >2^{d-1},\\
				\lambda^{-1/\delta} \log^{d}(\lambda) \text{ if } \delta =  2^{d-1},\\
				\lambda^{-1/2^{d-1}} \text{ if } \delta <2^{d-1},
			\end{cases}
		\end{equation*}
		which becomes the conclusion of Theorem \ref{thm: main theorem} by merging the $\delta=2^{d-1}$ case with the $\delta\ge 2^{d-1}$ case.

	\end{proof}
	
	The proofs of Lemma \ref{lem: lin op general} and Corollary \ref{cor: linop} could also be used to expand the class of phases considered in \cite{GiGrXi18}. In particular, their results could be extended to nondegenerate phases with Newton distance less than or equal to 2, and in this case the decay obtained is no worse than $\lambda^{-1/2}\log^d(\lambda)$.
	
	Although we cannot interpolate the final estimate for $\Lambda_d$ with other local Loomis-Whitney inequalities to obtain a result like Proposition \ref{prop: interpolation sharpness}, one could interpolate local estimates and then sum as in  Lemma \ref{lem: lin op general} and Corollary \ref{cor: linop} to obtain sharp results for other $p_j$ smaller than in Theorem \ref{thm: main theorem}. For example, interpolating local estimates for $(8,4,4,2)$ with $(p(4),\dots,p(4))=(24/7,\dots,24/7)$ with $\theta_1=\theta_2=1/2$ provides the following local bound for $(48/10,48/13, 48/13, 48/19):=(p_1,\dots,p_4)$:
	\begin{equation*}
		|\Lambda_d(f_1,\dots, f_d)| \lesssim (\lambda\varep^{\alpha})^{-2^{-4}} \prod_{j=1}^{d} \varep_j^{1-P+p_j^{-1}}\|f_j\|_{p_j}
	\end{equation*}
	Summing with Corollary \ref{cor: linop} grants sharp decay for these exponents. Transforming this example into a precise theorem leads to interpolations that are quite tedious to analyze compared to Proposition \ref{prop: interpolation sharpness} and its proof, so we leave it for the interested reader to work out any details, should the need arise.

	\Addresses
\end{document}